\providecommand{\remove}[1]{}
\newcommand{\mn}[0]{\medskip\noindent}
\newcommand{\F}{\mathcal{F}}
\newcommand{\B}{\mathcal{B}}
\newcommand{\M}{\mathcal{M}}
\newcommand{\h}{\mathcal{H}}
\newtheorem{theorem}{Theorem}[section]
\newtheorem{proposition}[theorem]{Proposition}
\newtheorem{claim}[theorem]{Claim}
\newtheorem{definition}[theorem]{Definition}
\newtheorem{remark}[theorem]{Remark}
\newtheorem{observation}[theorem]{Observation}
\newtheorem*{theorem*}{Theorem}
\newtheorem*{lemma*}{Lemma}
\newtheorem*{proposition*}{Proposition}
\begin{document}

\title{Blockers for Simple Hamiltonian Paths in Convex Geometric Graphs of Odd Order}

\author{Chaya Keller\thanks{Department of Mathematics, Ben-Gurion University of the NEGEV, Be'er-Sheva, Israel. \texttt{kellerc@math.bgu.ac.il}. Research partially supported by Grant 635/16 from the Israel Science Foundation, by the Shulamit Aloni Post-Doctoral Fellowship of the Israeli Ministry of Science and Technology, by the Kreitman Foundation Post-Doctoral Fellowship and by the Hoffman Leadership and Responsibility Program of the Hebrew University.}
\mbox{ }
and Micha A. Perles\thanks{Einstein Institute of Mathematics, Hebrew University, Jerusalem, Israel.
\texttt{perles@math.huji.ac.il}}
}

\maketitle

\begin{abstract}
Let $G$ be a complete convex geometric graph, and let $\mathcal{F}$ be a family of subgraphs of $G$. A \emph{blocker} for $\mathcal{F}$ is a set of edges, of smallest possible size, that has an edge in common with every element of $\mathcal{F}$. In~\cite{KP16} we gave an explicit description of all blockers for the family of simple (i.e., non-crossing) Hamiltonian paths (SHPs) in $G$ in the `even' case $|V(G)|=2m$. It turned out that all the blockers are simple \emph{caterpillar trees} of a certain class. In this paper we give an explicit description of all blockers for the family of SHPs in the `odd' case $|V(G)|=2m-1$. In this case, the structure of the blockers is more complex, and in particular, they are not necessarily simple. Correspondingly, the proof is more complicated.
\end{abstract}

\section{Introduction}

In this paper we consider convex geometric graphs, i.e., graphs whose vertices are points in convex position in the plane, and whose edges are segments connecting pairs of vertices. Throughout the paper, $G$ will denote a convex geometric graph and $\F$ will denote a family of subgraphs of $G$.
\begin{definition}
A set of edges in $E(G)$ is called a blocking set for $\F$ if it intersects (i.e., contains an edge of) every element of $\F$. A \emph{blocker} for $\F$ is a blocking set of smallest possible size. The family of blockers for $\F$ is denoted $\B(\F)$.
\end{definition}
Let $G=CK(n)$ be the complete convex geometric graph of order $n$. Finding the size of the blockers $\B(\F)$ for a family $\F$ is a natural Tur\'{a}n-type question, as it is clearly equivalent to the question: what is the maximal possible number of edges in a convex geometric graph on $n$ vertices that does not include any element of $\F$ as a subgraph?
This question was extensively studied with respect to various families $\F$, e.g., all sets of $k$ disjoint edges~\cite{K79,KP96} and all sets of $k$ pairwise crossing edges~(\cite{CP92}, and see also~\cite{BKV03}).

In various cases, including the two cases mentioned above, the size of the blockers is known. In these cases, the next natural step is to provide a \emph{characterization} of the blockers for $\F$.

\medskip

In~\cite{KP12} we considered the `even' case $G=CK(2m)$, and provided a complete characterization of the blockers for the family $\M$ of simple (i.e., non-crossing) perfect matchings (SPMs) of $G$. We described the blockers as certain simple \emph{caterpillar} subtrees of $G$ of size $m$. (Roughly speaking, a caterpillar is a tree whose derivative is a path. See~\cite{Caterpillar1} for the exact definition of caterpillars.) In~\cite{KP16} we showed that the blockers for the family $\h$ of simple Hamiltonian paths (SHPs) in $CK(2m)$ are exactly the same as the blockers for SPMs.

In this paper we consider the (somewhat more complicated) case of SHPs in a complete convex geometric graph of odd order $2m-1$. Our main result is a complete description of the blockers for $\h$. This time, not all blockers are caterpillar trees, and they do not even have to be simple. In order to describe the blockers, we have to define a few additional notions.

\medskip

Let $V$ be the set of vertices of $G=CK(2m-1)$ (viewed as the vertex set of a convex polygon $P$ in the plane), labelled cyclically from $0$ to $2m-2$. The \emph{distance} between two vertices $i,j$ is $\min(|i-j|,(2m-1)-|i-j|)$. The edges that belong to the boundary of $P$ (i.e., the edges of the form  $[i,i+1]$ ($0 \leq i \leq 2m-3$), along with the edge $[2m-2,0]$) are called \emph{boundary edges} of $G$. The \emph{direction} of an edge $[i,j]$ is defined to be $i+j (\bmod (2m-1))$.
 Two edges are called \emph{parallel} if they have the same direction. Note that every edge of $G$ is parallel to a unique boundary edge.
 Two directions are called \emph{boundary-consecutive} if they contain consecutive boundary edges. (i.e., if they differ by 2 ($\bmod(2m-1)$).)

\medskip

Our main theorem is the following:

\medskip \noindent \textbf{Theorem.} Let $G=CK(2m-1)$, and let $\h$ be the family of simple Hamiltonian paths in $E(G)$. Any blocker for $\h$ consists of $m$ edges in $m$ boundary-consecutive directions, and up to cyclical rotation by $0 \leq k \leq 2m-2$, it has one of the following two forms. Moreover, all the sets described below (Class A and Class B) are indeed blockers.

\medskip

\begin{figure}[tb]
\begin{center}
\scalebox{1.0}{
\includegraphics{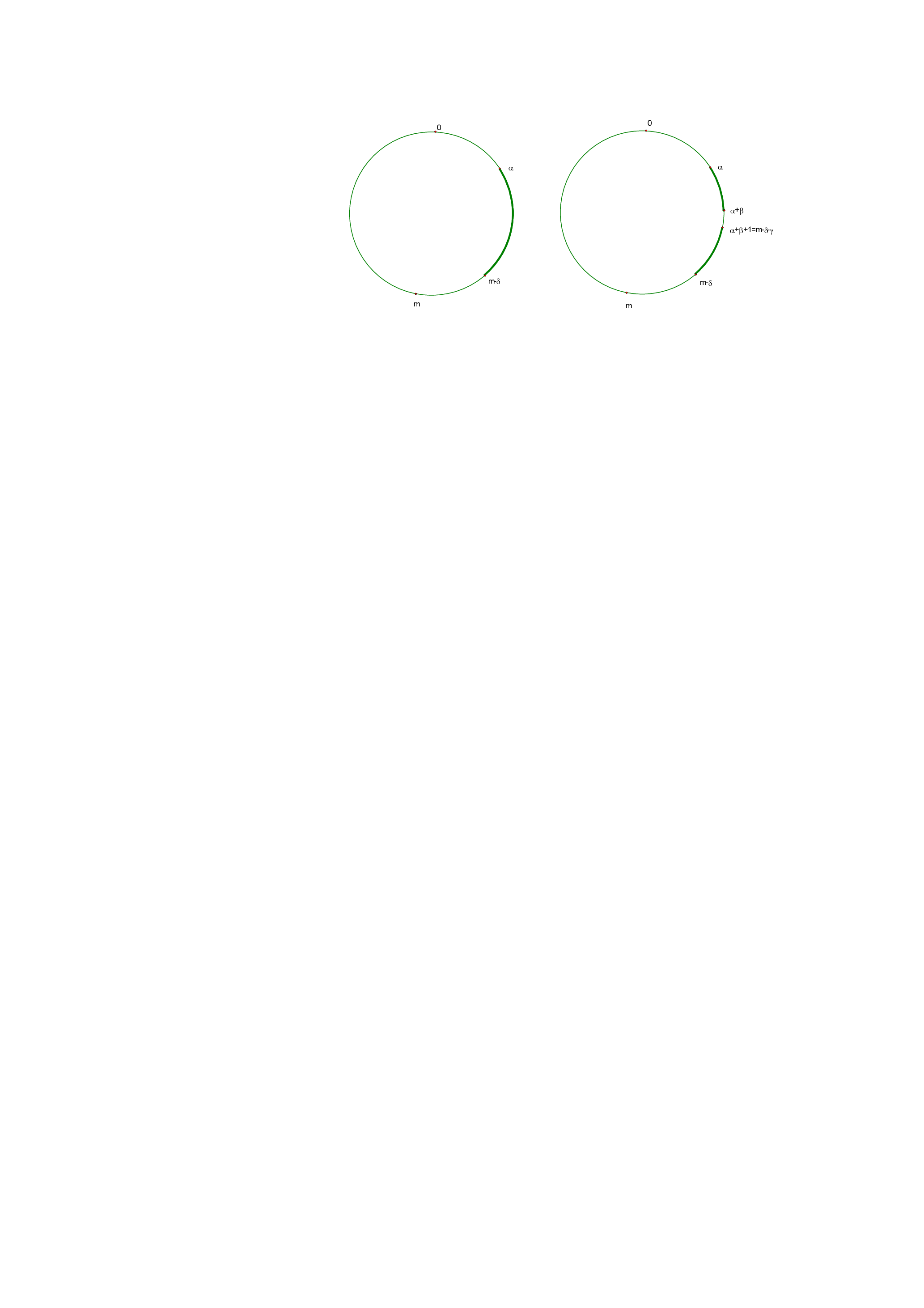}
} \caption{The notations used in the descriptions of the blockers in the main theorem. The left figure corresponds to a blocker of Class A, and the right figure corresponds to a blocker of Class B.}
\label{Fig:Aux-Intro1}
\end{center}
\end{figure}

\noindent \textbf{Class A. Blockers that contain a consecutive boundary path.} The edges of the blocker are parallel to the boundary edges $[0,1],[1,2],\ldots,[m-1,m]$. They consist of three parts:
\begin{enumerate}
\item The boundary path $BP=\langle \alpha, \alpha+1,\ldots,m-\delta \rangle$, for some $\alpha,\delta \geq 0$ with $0 \leq \alpha+\delta \leq m-2$.
The length of $BP$ is $m-\alpha-\delta$, and ranges between 2 and $m$.
This path is illustrated in the left part of Figure~\ref{Fig:Aux-Intro1}.

\item The edges $u_i=[i-1-\epsilon_i,i+\epsilon_i]$, $1 \leq i \leq \alpha$ (where indices are taken modulo $2m-1$), for $\epsilon_1>\epsilon_2 \ldots > \epsilon_{\alpha}>0$, $\alpha - i +1 \leq \epsilon_i \leq m- \delta - i -1$. (These are the edges parallel to $[0,1],[1,2],\ldots,[\alpha-1,\alpha]$).


\item The edges $v_j=[m-j-\xi_j,m-j+1+\xi_j]$, $1 \leq j \leq \delta$, for $\xi_1>\xi_2 \ldots > \xi_{\delta}>0$, $\delta +1-j \leq \xi_i \leq m-j- \alpha -1$. (These are the edges parallel to $[m-\delta,m-\delta+1],\ldots,[m-1,m]$),
\end{enumerate}
where in addition it is required that $\epsilon_1 + \xi_1 \leq m-2$ (which means that all edges of the second part lie `above' all edges of the third part).

Informally, the conditions mean that the edges of the second and third parts are in one-to-one correspondence with the boundary edges in the path $\langle 0,1,\ldots,m \rangle$ that are not included in the blocker. In addition, these edges are diagonals of the convex polygon $Q=\mathrm{conv}(V(G))$, and each one connects an internal vertex of $BP$ with a vertex that is not on $BP$. If we order them $u_{\alpha},u_{\alpha - 1}, \ldots, u_1,v_1,v_2,  \ldots, v_{\delta}$, then their `root' on $BP$ advances (weakly) along $BP$, and their directions decrease through the $\alpha + \delta$ values $2 \alpha -1, 2 \alpha -3, \ldots, 1,0,-2, \ldots, -2 \delta +2 $ ($\alpha$ positive odd values followed by $\delta$ even non-positive values).
Any two such edges are disjoint, and the distance between the other endpoints is larger than the distance between the endpoints on the $BP$. Blockers of this class are simple caterpillars, and are actually similar to the blockers in the `even' case ($G=CK(2m)$). An example of a blocker of this class is presented in the left part of Figure~\ref{Fig:Blockers1}.

\medskip

\noindent \textbf{Class B. Blockers with a broken boundary path.} The edges of the blocker are parallel to the boundary edges $[0,1],[1,2],\ldots,[m-1,m]$. They consist of five parts, as follows:

\medskip \noindent {\rm (1)--(2).} The boundary paths $\langle \alpha, \alpha+1,\ldots,\alpha+\beta \rangle$ and $\langle \alpha+\beta+1,\alpha+\beta+2,\ldots, m-\delta \rangle$ of lengths $\beta,\gamma$, respectively, for some $\alpha,\delta \geq 0$, $\beta,\gamma \geq 2$ with $\alpha+\delta \leq m-5$ and $\beta+\gamma=m-\alpha-\delta-1$. (That is, the boundary path $\langle \alpha, \alpha+1,\ldots, m-\delta \rangle$ misses a single edge $[\alpha+\beta,\alpha+\beta+1]$.) These two paths are illustrated in the right part of Figure~\ref{Fig:Aux-Intro1}.

\medskip \noindent {\rm (3).} The edge $[\alpha+\beta-\eta,\alpha+\beta+1+\eta]$, for some $1 \leq \eta \leq \min(\beta-1,\gamma-1)$ (which is parallel to the `missing' boundary edge $[\alpha+\beta,\alpha+\beta+1]$).

\medskip \noindent {\rm (4).} The edges $[i-1-\epsilon_i,i+\epsilon_i]$, $1 \leq i \leq \alpha$ (where indices are taken modulo $2m-1$), for $\alpha+\beta-1>\epsilon_1>\epsilon_2 \ldots > \epsilon_{\alpha}>0$. (These are the edges parallel to $[0,1],[1,2],\ldots,[\alpha-1,\alpha]$; compared to Case A, we have the additional condition that they lie `above' the missing boundary edge).

\medskip \noindent {\rm (5).} The edges $[m-j-\xi_j,m-j+1+\xi_j]$, $1 \leq j \leq \delta$, for $\gamma+\delta-1>\xi_1>\xi_2 \ldots > \xi_{\delta}>0$. (These are the edges parallel to $[m-\delta,m-\delta+1],\ldots,[m-1,m]$; compared to Case A, we have the additional condition that they lie `below' the missing boundary edge).

Blockers of this class are not caterpillars, and moreover, they are not necessarily simple. An example of a blocker of this class is presented in the right part of Figure~\ref{Fig:Blockers1}, and another example -- which is not even simple -- is presented in Figure~\ref{Fig:Non-Simple-Blocker}.

\begin{figure}[tb]
\begin{center}
\scalebox{1.0}{
\includegraphics{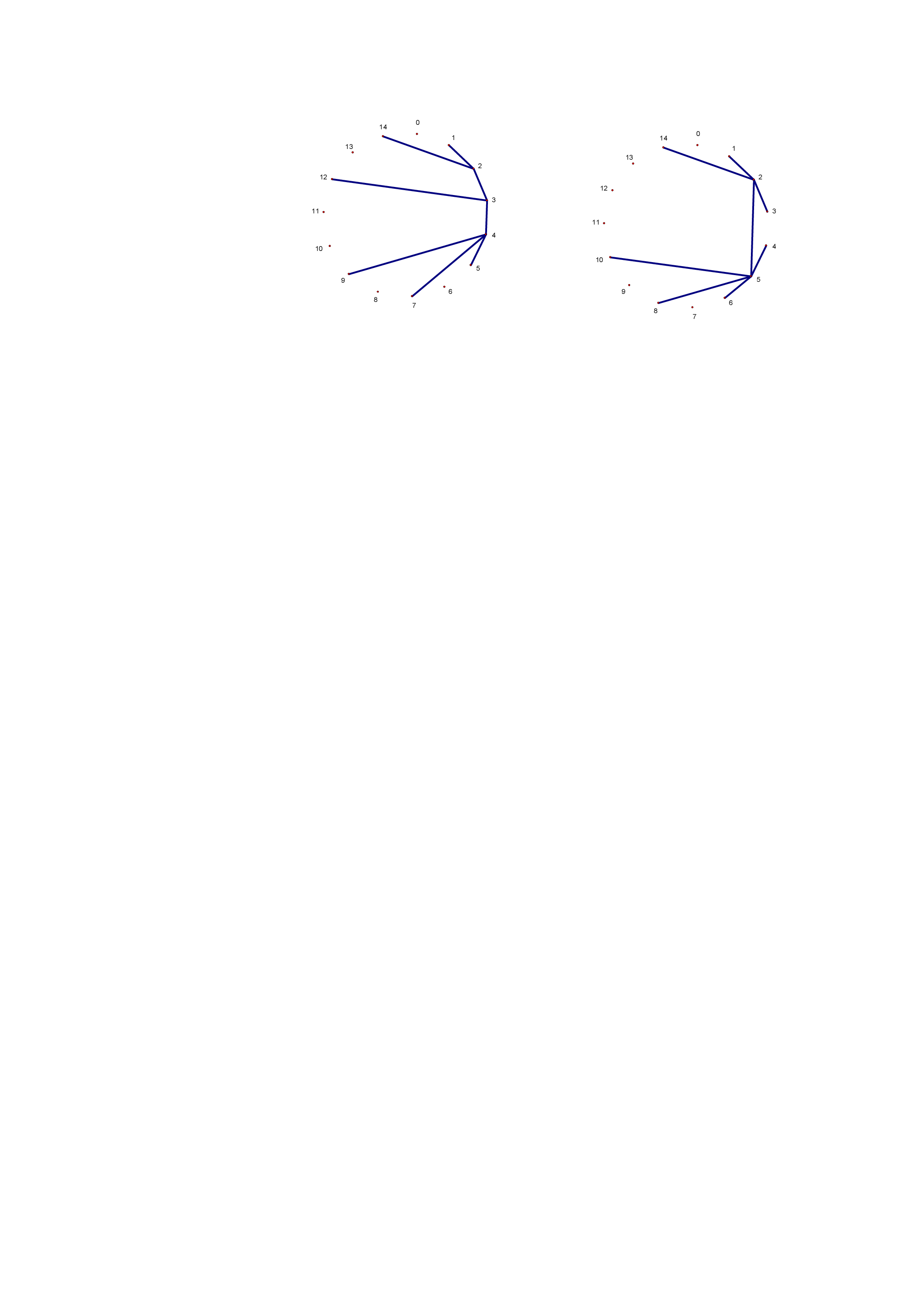}
} \caption{Two blockers for SHPs in $CK(15)$. The left blocker is of Class A, with parameters $\alpha=1$, $\delta=3$, $\epsilon_{1}=1$, $(\xi_1,\xi_2,\xi_3) = (4,2,1)$. The right blocker is of Class B, with parameters $(\alpha,\beta,\gamma,\delta)=(1,2,2,2)$, $\eta=1$, $\epsilon_1=1$, and $(\xi_1,\xi_2)=(2,1)$.}
\label{Fig:Blockers1}
\end{center}
\end{figure}

\begin{figure}[tb]
\begin{center}
\scalebox{1.0}{
\includegraphics{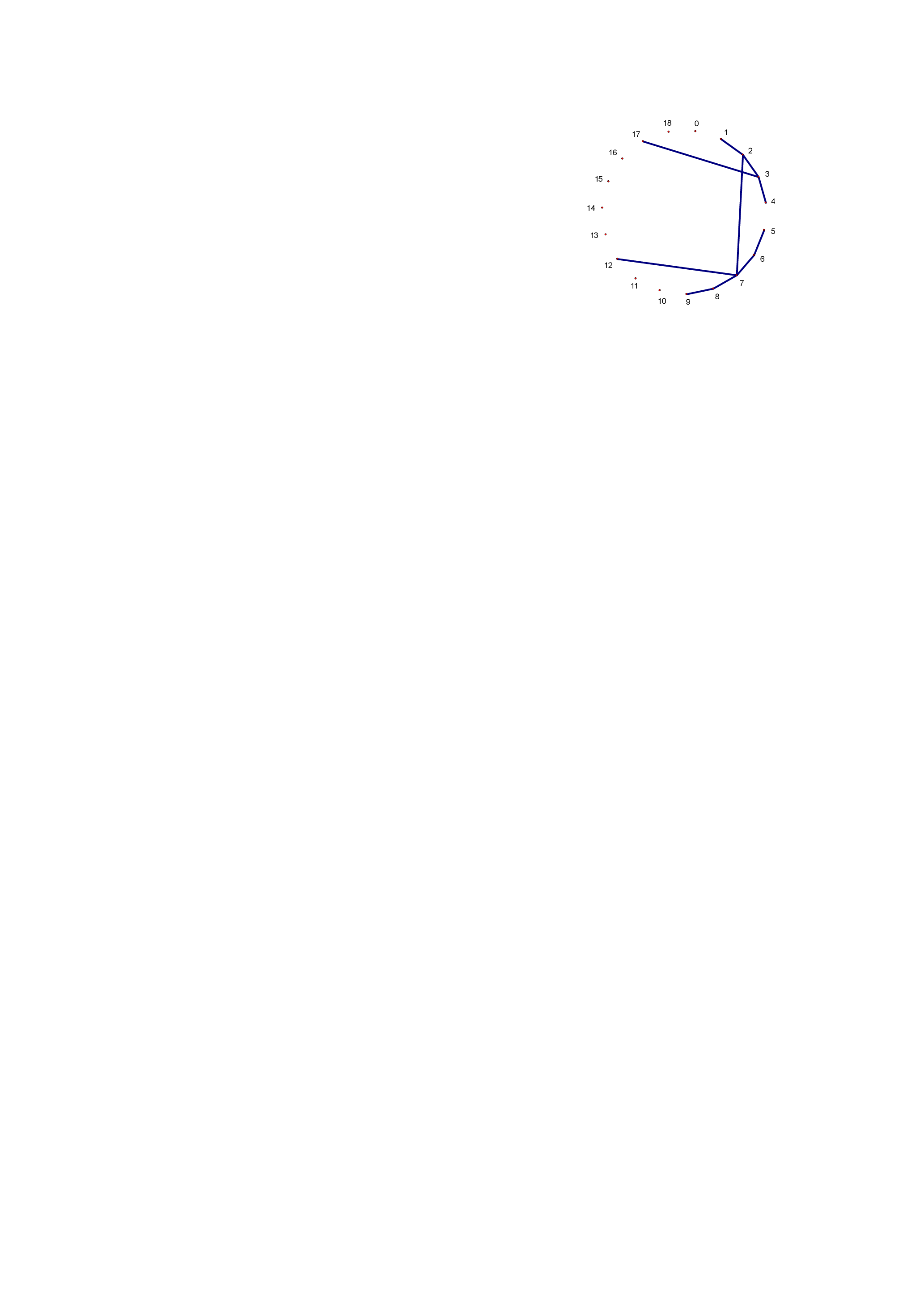}
} \caption{A blocker for SHPs in $CK(19)$ which is not even simple. Its parameters are  $(\alpha,\beta,\gamma,\delta)=(1,3,4,1)$, $\eta=2$, $\epsilon_1=2$, and $\xi_1=2$.}
\label{Fig:Non-Simple-Blocker}
\end{center}
\end{figure}

\medskip

The proof consists of three steps, whose formal description appears in the following sections. In Section~\ref{sec:boundary} we prove that the boundary edges of any blocker $B$ are either consecutive or consist of two runs of consecutive edges separated by a single edge that does not belong to the blocker (see Figure~\ref{Fig:Blockers1}). In Section~\ref{sec:Class_A} we prove that any blocker has to satisfy all other conditions that are common to Classes~A and B. In Section~\ref{sec:Class_B} we prove that if the boundary path of a blocker is broken (i.e., misses an edge) then that blocker must satisfy the additional conditions given in the definition of Class~B above. Finally, in Section~\ref{sec:converse} we prove the converse direction of the theorem, namely, that all elements of Classes~A and B are indeed blockers.

\section{Notations and Basic Observations}

The vertices of our complete convex geometric graph are $2m-1$ points in convex position, whose convex hull is a convex $(2m-1)$-gon $P$ ($m \geq 2$). For the sake of convenience we assume, without loss of generality, that $P$ is a regular $(2m-1)$-gon. We label the vertices (clockwise) $0,1,2,\ldots,2m-2$, with boundary edges $[i-1,i]$ ($0 \leq i < 2m-1$) and $[2m-2,0]$. We regard the labels as elements of the cyclic group $\mathbb{Z}_{2m-1}=\mathbb{Z}/(2m-1)\mathbb{Z}$. We define the \emph{direction} of an edge $[a,b]$ of $G$ to be the modular sum $a+b(\bmod(2m-1))$. Two edges are \emph{parallel} iff they have the same direction. For each $i \in \mathbb{Z}_{2m-1}$, the set $D_i$ of parallel edges in direction $i$ is a simple almost perfect matching that misses only one vertex.

\medskip

Our first observation is that the size of the blockers for SHPs is $m$. It is proved as follows.
\begin{proposition}
Let $B$ be a blocking set for all SHPs of $G$. Then $|B| \geq m$.
\end{proposition}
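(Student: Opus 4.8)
The plan is to exhibit, for each choice of fewer than $m$ edges, a simple Hamiltonian path avoiding all of them. The natural tool is a family of $m$ pairwise edge-disjoint SHPs: if such a family exists, then any blocking set must contain at least one edge from each of the $m$ paths, and since the paths are pairwise edge-disjoint these are $m$ distinct edges, forcing $|B| \geq m$. So the whole proposition reduces to constructing $m$ edge-disjoint simple Hamiltonian paths in $CK(2m-1)$.

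First I would recall the standard "zig-zag" (boustrophedon) Hamiltonian path on $2m-1$ points in convex position: starting at a vertex $k$, alternately jump back and forth, $k, k+1, k-1, k+2, k-2, \ldots$, which visits all $2m-1$ vertices and is non-crossing because consecutive chords are nested. There are $2m-1$ rotations of this path (one for each starting vertex $k \in \mathbb{Z}_{2m-1}$), but I only need $m$ of them, and I must check that I can choose $m$ rotations that are pairwise edge-disjoint. A cleaner route: classify edges by direction. The zig-zag path starting at $k$ uses exactly one edge in each direction $2k-1, 2k+1, 2k-3, \ldots$ — more precisely its edge set is a transversal hitting a controlled set of direction classes $D_i$. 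Since $2m-1$ is odd, the map $k \mapsto 2k$ is a bijection on $\mathbb{Z}_{2m-1}$, so rotating the starting vertex shifts the set of directions used. I would compute explicitly which $m$ directions the zig-zag from vertex $0$ occupies, then argue that a suitable arithmetic-progression choice of $m$ starting vertices yields $m$ zig-zag paths whose direction-sets are pairwise disjoint; edge-disjointness is then immediate since edges in different directions are never equal.

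The step I expect to be the main obstacle is verifying the disjointness bookkeeping: a single Hamiltonian path on $2m-1$ vertices has $2m-2$ edges, and $m$ edge-disjoint paths would use $m(2m-2)$ edges out of the $\binom{2m-1}{2} = (2m-1)(m-1)$ total — that is $\tfrac{m}{2m-1}\binom{2m-1}{2}$ edges, so the construction is not wasteful and leaves little slack, meaning the choice of rotations must be exactly right. I would handle this by making the direction-class description precise: show each zig-zag path meets $2m-2$ of the $2m-1$ direction classes, missing exactly one (the "boundary-edge-opposite" direction near its far end), and that across the $2m-1$ rotations each direction class $D_i$ is met by the same number of paths; a counting/averaging argument then guarantees we can greedily pick $m$ of the rotations no two of which share a direction class — or, better, pin down an explicit list of $m$ starting vertices (e.g.\ consecutive ones, or ones in an arithmetic progression of step $2$) and check disjointness directly using the bijectivity of $k \mapsto 2k \pmod{2m-1}$. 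Once the $m$ edge-disjoint SHPs are in hand, the lower bound $|B| \ge m$ follows in one line, completing the proof.
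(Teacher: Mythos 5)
Your reduction cannot work as stated: $m$ pairwise edge-disjoint Hamiltonian paths in $CK(2m-1)$ do not exist. Each Hamiltonian path on $2m-1$ vertices has $2m-2$ edges, so $m$ edge-disjoint ones would occupy $m(2m-2)=2m(m-1)$ edges, while the whole graph has only $\binom{2m-1}{2}=(2m-1)(m-1)$ edges. Your own sanity check contains an arithmetic slip: $m(2m-2)$ is not $\tfrac{m}{2m-1}\binom{2m-1}{2}=m(m-1)$, and the correct count shows the budget is exceeded, not merely tight. The best possible is $m-1$ edge-disjoint SHPs, which yields only $|B|\ge m-1$, one short of the claim. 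A second inaccuracy: the zig-zag path $k,k+1,k-1,k+2,\ldots$ is not a transversal of many direction classes; it uses edges in exactly two directions, $2k+1$ and $2k$, alternately, each appearing $m-1$ times, so its edge set is precisely $D_{2k}\cup D_{2k+1}$. Consequently two such paths with overlapping direction pairs share an entire class of $m-1$ edges, and since the pairs $\{j,j+1\}$ are the edges of a cycle on $2m-1$ direction classes, at most $m-1$ of them can be chosen pairwise disjoint.

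The repair is to switch from a packing argument to a covering argument, which is what the paper does. The path $D_i\cup D_{i+1}$ being an SHP for every $i\in\mathbb{Z}_{2m-1}$ forces any blocking set $B$ to contain an edge of direction $i$ or of direction $i+1$, for every $i$; equivalently, the set of directions \emph{missed} by $B$ contains no two cyclically adjacent elements, hence has size at most $m-1$. Since $B$ touches at most $|B|$ direction classes, it touches at least $(2m-1)-(m-1)=m$ of them only if $|B|\ge m$. Note that here the witnessing paths deliberately share edges; the point is not that they are disjoint but that each one confines $B$'s intersection to two specific direction classes, and the cyclic structure of $\mathbb{Z}_{2m-1}$ then does the counting for you.
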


\begin{proof}
The union of two adjacent $D$'s, say $D_i$ and $D_{i+1}$ ($i \in \mathbb{Z}_{2m-1}$) forms a zig-zag Hamiltonian path.
This implies that a blocker for SHPs in $CK(2m-1)$ cannot miss two adjacent sets $D_i,D_{i+1}$ ($i \in \mathbb{Z}_{2m-1}$). Due to the circular structure of $\mathbb{Z}_{2m-1}$, it follows that there is no blocker of size $<m$.
\end{proof}

\begin{proposition}
There exists a blocking set $B$ for all SHPs of $G$ with $|B|=m$.
\end{proposition}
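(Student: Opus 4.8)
The plan is to display an explicit blocking set of $m$ edges and verify that it meets every SHP. The natural candidate is the ``half--boundary path'' $B=\{[0,1],[1,2],\ldots,[m-1,m]\}$, which has $m$ edges, lying in the $m$ boundary-consecutive directions $1,3,\ldots,2m-3,0$; in the notation of the main theorem it is exactly the Class~A blocker with $\alpha=\delta=0$. Thus the whole content is to show that $B$ intersects every simple Hamiltonian path. (One could also just invoke the converse direction of the main theorem, proved in Section~\ref{sec:converse}; but a direct argument is short to set up.)

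I would use the standard description of SHPs of a complete convex geometric graph: a Hamiltonian path $H$ of $CK(n)$ is simple if and only if, listed in the order $H$ traverses its vertices as $H=p_0-p_1-\cdots-p_{n-1}$, every prefix $A_k=\{p_0,\ldots,p_k\}$ is an \emph{arc}, i.e. a set of cyclically consecutive vertices. (The forward implication is immediate; for the converse one peels an endpoint: in a simple path, the edge at an endpoint must go to one of that endpoint's two circle-neighbours, since a chord there would split the remaining vertices into two parts that the rest of the path could not connect without a crossing edge --- so every suffix, hence every prefix, is an arc.) In this picture, $A_{k+1}$ is obtained from $A_k$ by placing a new vertex next to one of the two frontier vertices of $A_k$: if that vertex is $p_k$ (a ``same--side'' step) the new edge $[p_k,p_{k+1}]$ is a boundary edge, and otherwise it is a chord joining $p_k$ to the vertex just beyond the other frontier. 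In particular the first and the last edge of $H$ are boundary edges.

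Now suppose, for contradiction, that some SHP $H$ avoids $B$, and write $H=p_0-\cdots-p_{n-1}$ as above. Since every vertex $v\in L:=\{1,\ldots,m-1\}$ has both its incident boundary edges in $B$, we obtain: (i) neither endpoint of $H$, nor its path-neighbour, lies in $L$; (ii) each vertex of $L$ enters the growing arc by a chord step, and so does the step immediately following such an entry; (iii) the last vertex of $H$ lies in $R:=\{0,m,m+1,\ldots,2m-2\}$, because a vertex adjoined last is adjoined by a boundary edge to a circle-neighbour, which for a vertex of $L$ is in $B$. Let $k^*$ be the step at which the last vertex of $L$ enters the arc. If $p_{k^*}$ were an interior vertex of the interval $L$, then $A_{k^*-1}$ would have to contain both circle-neighbours of $p_{k^*}$ but not $p_{k^*}$ itself, forcing $A_{k^*-1}=V\setminus\{p_{k^*}\}$, i.e. $p_{k^*}=p_{n-1}\in L$, contradicting (iii); hence $p_{k^*}\in\{1,m-1\}$, say (by the left--right symmetry of $B$) $p_{k^*}=m-1$. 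From step $k^*$ on, the complement $M_k=V\setminus A_k$ is an arc contained in $R$ which contains the vertex $m$ and shrinks by one of its two end-vertices at each step, and $m-1$ remains a frontier vertex of $A_k$ until $m$ is adjoined --- but $m$ cannot be adjoined from the frontier $m-1$, as that is the edge $[m-1,m]\in B$. Tracking $M_k$ and the two frontiers, conditions (i)--(ii) leave at each step only one extension that does not create an edge of $B$; following it forces the final vertex to be adjoined by the edge $[m-1,m]\in B$ --- the contradiction. Hence $B$ meets every SHP, so $|B|=m$ suffices, and together with the previous proposition the blockers have size exactly $m$.

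The step I expect to be the main obstacle is the last one: proving rigorously that the growing arc cannot swallow the whole interval $\{1,\ldots,m-1\}$ and then be completed without ever using an edge of $B$. I would formalize this as an induction maintaining the invariant ``$m-1$ is a frontier of $A_k$ and $M_k$ is an arc in $R$ containing the vertex $m$'', and checking that it is preserved by every legal extension except a terminal one that lands on $[m-1,m]$; the same analysis also forces $p_{k^*-1}$ to be the frontier opposite to $m-1$ (so that $m-1$ is adjoined at step $k^*$ by a chord, not by $[m-2,m-1]$), and that auxiliary claim is in effect the same impossibility statement for the shorter forbidden interval $\{1,\ldots,m-2\}$ --- which is where the argument acquires a recursive flavour.
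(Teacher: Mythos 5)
Your choice of blocking set is the same as the paper's (the boundary path $\langle 0,1,\ldots,m\rangle$), and your reductions (i)--(iii) and the localization $p_{k^*}\in\{1,m-1\}$ are correct. But the gap is exactly where you predicted it, and it is genuine: the claimed forcing after step $k^*$ is false. Suppose $A_{k^*}=\{j,j+1,\ldots,m-1\}$ (cyclically) with current endpoint $m-1$ and $M_{k^*}=\{m,m+1,\ldots,j-1\}$ of size at least $2$. Then you may adjoin $j-1$ by the chord $[m-1,j-1]$, next adjoin $m$ by the edge $[j-1,m]$ (a chord, or the boundary edge $[m,m+1]\notin B$), and then finish along the boundary through $m+1,\ldots,j-2$, never touching $B$. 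So conditions (i)--(ii) do not ``leave only one extension at each step'' (e.g.\ from the arc $\{j,\ldots,0\}$ with endpoint $j$ both available extensions avoid $B$), and no contradiction can be extracted from the post-$k^*$ phase alone. The true obstruction is that the state $A_{k^*}$ is unreachable: each interior vertex of $\{1,\ldots,m-1\}$ must be adjoined by a chord from the far frontier, and immediately afterwards the path must jump back to the far side by another chord, so absorbing all of $L$ consumes more vertices of the complementary arc than are available. That counting is the actual content of the proof, and your sketched recursion ``on the shorter forbidden interval'' is not set up to deliver it -- the statement being recursed on (reachability of an arc-plus-endpoint configuration by a $B$-avoiding simple path) is not the statement you started from, and you never prove it.

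For comparison, the paper's verification is a short global count: if a spanning simple subgraph without isolated vertices contains an edge $[a,c]$ with $0\le a<c\le m$, then choosing such an edge with $c-a$ minimal traps the vertices strictly between $a$ and $c$ unless $c-a=1$, so the subgraph contains a boundary edge of $\langle 0,\ldots,m\rangle$. Hence an SHP avoiding $B$ has every edge incident to one of the $m-2$ vertices $m+1,\ldots,2m-2$, giving at most $2(m-2)$ edges, while an SHP has $2(m-1)$. I suggest you replace the frontier-tracking by this counting argument (or simply invoke Theorem~\ref{thm:opp_direc_A} with $\alpha=\delta=0$, as you note).
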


\begin{proof}
Let $B$ be the set of edges of the boundary path $\langle 0,1,\ldots,m \rangle$. The reader will easily convince himself that if $C$ is any spanning simple subgraph of $CK(2m-1)$ without isolated vertices, and if $[a,c] \in E(C)$ for some $0 \leq a < c \leq m$, then $C$ must use some boundary edge $[b,b+1]$, with $a \leq b <b+1 \leq c$. Thus, if $C$ avoids $B$, then every edge of $C$ must use at least one of the $m-2$ vertices $m+1,m+2,\ldots,2m-2$. This means that if $C$ is a simple path, then it can have at most $2(m-2)$ edges. But an SHP in $CK(2m-1)$ has $2(m-1)$ edges, a contradiction.
\end{proof}

Combining these two propositions, we find that the size of blockers for SHPs in $G$ is $m$. Furthermore, it follows that any blocker must visit the sets $D_i$ alternately, i.e., for some initial value $a$ it must visit $D_a,D_{a+2},\ldots,D_{a+2i},\ldots,D_{a+2m-2}$. (Note that the directions $a$ and $a+2m-2$ are adjacent.)
The directions $a,a+2,\ldots,a+2i,\ldots,a+2m-2$ are exactly the directions of the edges of the boundary path $\langle c,c+1,\ldots,c+m-1,c+m \rangle$ of $P$, where $2c \equiv a-1 (\bmod (2m-1))$.
Due to the circular symmetry of $P$, we may restrict our attention to the case $c=0$.

We conclude this section with the following simple observation:
\begin{observation}
\label{2-bdry-edges}
Any blocker $B$ contains at least two boundary edges.
\end{observation}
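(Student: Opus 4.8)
The plan is to confront the blocker with a very small and explicit sub-family of SHPs, namely the ones that live on the boundary of the convex polygon. For each boundary edge $e$ of $G$ I would look at the Hamiltonian path $P_e$ obtained by deleting $e$ from the boundary Hamiltonian cycle $\langle 0,1,\ldots,2m-2,0\rangle$. Its edge set is exactly the set of all $2m-2$ boundary edges other than $e$, and the one routine point to verify is that $P_e\in\h$: this is immediate, since boundary edges of a convex polygon are pairwise non-crossing and $P_e$ visits all $2m-1$ vertices along $2m-2$ edges.

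Since $B$ is a blocking set, it must meet $E(P_e)$ for every boundary edge $e$; as $E(P_e)$ consists only of boundary edges, $B$ must contain a boundary edge different from $e$. Running this over all $2m-1$ choices of $e$ forces $B$ to contain at least two distinct boundary edges, which is exactly the assertion. (Equivalently, if $B$ had at most one boundary edge $e_0$, then $P_{e_0}$ — or $P_e$ for an arbitrary boundary edge $e$, in the case where $B$ has no boundary edge at all — would be an SHP disjoint from $B$, contradicting that $B$ blocks $\h$.)

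I do not expect any genuine obstacle here. The only things to be careful about are exhibiting the correct path $P_e$ for the given boundary edge, and noticing that the argument never uses the minimality $|B|=m$, so it in fact proves the slightly stronger statement that every blocking set for $\h$ — not merely a blocker — contains at least two boundary edges.
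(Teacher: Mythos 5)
Your argument is correct and is exactly the paper's: the paper's one-line proof observes that the complement of any single boundary edge is an SHP, which is precisely your family $P_e$, and the conclusion follows just as you state. No issues.
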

Indeed, the complement of a single edge on the boundary of $P$ is an SHP.

\section{The Boundary Edges of a Blocker}
\label{sec:boundary}

In this section we show that the boundary edges of blockers are of a very specific structure. Recall that in the `even' case (i.e., $G=CK(2m)$), the boundary edges of a blocker form a single path on the boundary of $\mathrm{conv}(V(G))$. While this is not necessarily the case for $CK(2m-1)$, we show that in this case, the boundary edges of any blocker consist either of a single path, or of two paths with a single missing edge in between.

\medskip

Suppose $B$ is a blocker for SHP's in $CK(2m-1)$. We already know that $B$ consists of $m$ edges of different directions, and that the directions of the edges of $B$ are those of a boundary path of length $m$. Assume, w.l.o.g., that this is the path $\langle 0,1, \ldots m \rangle$. We call this path \emph{the directional support of $B$}. Denote by $e_i$ $(i=1, \ldots , m)$ the edge of $B$ parallel (or equal) to the boundary edge $[i-1,i]$. Recall that by Observation~\ref{2-bdry-edges}, $B$ must contain at least two boundary edges. Assume $[\alpha, \alpha+1]$ and $[m-\delta -1, m-\delta]$ are the first and the last edges of $B$ on the directional support  $\langle 0,1, \ldots m \rangle$. $(0 \leq \alpha<\alpha+1 \leq m-\delta -1<m-\delta \leq m)$. We call the boundary path $\langle \alpha,\alpha+1, \ldots m - \delta \rangle$ the $\emph{backbone}$ of $B$.

\begin{proposition}\label{Prop:Boundary}
$B$ misses at most one edge of its backbone.
\end{proposition}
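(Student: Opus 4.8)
Throughout, write $W=\{0,1,\ldots,m\}$ for the ``top'' arc and $F=\{m+1,\ldots,2m-2\}$ for the ``bottom'' arc. I would argue by contradiction: suppose $B$ is a blocker whose backbone $\langle\alpha,\ldots,m-\delta\rangle$ misses at least two edges. Since every blocking set has at least $m$ edges and $|B|=m$, it suffices to exhibit a single simple Hamiltonian path $H$ of $CK(2m-1)$ that is disjoint from $B$. Recall what we already know about $B$: it has exactly one edge $e_i$ in each of the $m$ directions of the directional support $\langle 0,\ldots,m\rangle$; all of its boundary edges lie on the backbone; and the two extreme backbone edges $[\alpha,\alpha+1]$ and $[m-\delta-1,m-\delta]$ lie in $B$, so the missing backbone edges are interior to the backbone. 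Fix two of them, $[g,g+1]$ and $[h,h+1]$ with $\alpha+1\le g<h\le m-\delta-2$, chosen consecutive among the missing edges. Two facts will be used constantly: (i) $B$ has no edge whose direction, viewed in $\{0,1,\ldots,2m-2\}$, lies in $\{2,4,\ldots,2m-2\}$ (those are precisely the $m-1$ directions not used by $B$); hence every one-vertex-skip chord $[v-1,v+1]$ with $1\le v\le m-1$, and every boundary edge of $P$ that is not on the backbone, is automatically not in $B$; and (ii) the two gap edges $[g,g+1]$ and $[h,h+1]$ are themselves not in $B$.

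I would build $H$ explicitly as a path with three pieces glued together. First, the boundary sub-path $0,1,\ldots,\alpha$ of $P$ on $\{0,\ldots,\alpha\}$, whose edges are off the backbone and hence not in $B$. Second, a sweep $m+1,m+2,\ldots,2m-2$ around $F$, attached to the rest by the boundary edges $[m,m+1]$ and $[2m-2,0]$ of $P$ (directions $2$ and $2m-2$, so not in $B$ by (i)), and internally using only boundary edges of $P$ within $F$ (again not in $B$). Third, joining these two pieces, a Hamiltonian sub-path on $\{\alpha+1,\ldots,m\}$ having $m$ as one endpoint, routed so that it uses the two gap boundary edges $[g,g+1]$, $[h,h+1]$ together with short even-direction skip chords, while dodging the at most $\alpha+\delta$ diagonals $e_i$ that $B$ places among these vertices. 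Non-crossingness of $H$ comes for free if it is generated via the standard ear description of simple Hamiltonian paths on a convex point set: a vertex sequence $v_1,\ldots,v_{2m-1}$ is such a path iff for every $k$ the vertex $v_{k+1}$ is one of the two neighbours of $v_k$ on the convex hull of the still-unvisited vertices together with $v_k$. This offers at most two admissible edges at each step, and every edge arising this way is either a boundary edge of $P$ or a short skip chord.

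The crux — and the reason the odd case is genuinely harder than the even case of~\cite{KP16} — is to show that this greedy peeling never gets \emph{stuck}, i.e. that at each step at least one of the (at most two) admissible edges misses $B$. Since $B$ carries only one edge per direction, a step would be blocked only if \emph{both} admissible edges were edges of $B$, necessarily in two distinct support directions; combining (i) and (ii) with the fact that only $\alpha+\delta$ of the $e_i$ are diagonals at all, one sees that the forbidden set at any step is too thin to do this. The genuinely delicate configurations are exactly two: (a) a diagonal $e_i$ with $i\le\alpha$ or $i>m-\delta$ whose two endpoints are still unvisited and happen to occupy both hull-neighbour slots at the current vertex, and (b) the case $h=g+1$ of adjacent gaps, where the three vertices $g,g+1,g+2$ must be entered in the correct order so that the following step is not blocked. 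A short case analysis on the positions of $g$ and $h$ relative to $\alpha$ and $\delta$, together with the freedom in choosing the zig-zag within the middle sub-arc and in choosing the starting vertex, disposes of (a) and (b). The resulting $H$ is a simple Hamiltonian path disjoint from $B$, contradicting that $B$ is a blocker; hence $B$ misses at most one edge of its backbone.
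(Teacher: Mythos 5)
There is a genuine gap here: you have correctly reduced the problem to exhibiting one SHP disjoint from $B$, but the entire difficulty of the proposition lies in the step you defer to ``a short case analysis,'' namely showing that your greedy construction never gets stuck against the diagonals of $B$. At this stage of the argument \emph{nothing} is known about where those diagonals sit: the $\alpha+\delta$ edges $e_i$ in directions $1,3,\ldots,2\alpha-1$ and $2m-2\delta+1,\ldots,0$, and the ($\geq 2$) edges parallel to the missing backbone edges, can a priori be anywhere in their direction classes. Several of your supporting claims also do not hold as stated. The ear/peeling description of simple Hamiltonian paths does \emph{not} produce only boundary edges and one-vertex-skip chords (any zig-zag already uses long chords), and your third piece cannot in general be routed inside the arc $\{\alpha+1,\ldots,m\}$ using only gap edges and even-direction skip chords: a run of three or more consecutive backbone edges of $B$ leaves its interior vertices reachable only by chords of length $\geq 2$, and the $2t$-skip chords with $t\geq 1$ have \emph{odd} directions, i.e.\ exactly the directions in which $B$ does carry an edge (for instance the edge of $B$ parallel to a gap $[g,g+1]$ is some $[g-t,g+1+t]$). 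So a single fixed routing can be blocked, and ``the forbidden set is too thin'' is not a proof.

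The paper resolves precisely this difficulty by a mechanism absent from your proposal: instead of one path, it builds $\alpha+\delta+1$ SHPs $P_\nu$ ($-\delta\le\nu\le\alpha$), each zig-zagging across the main diagonal $I_\nu=[\nu,\nu+m]$ and passing through both gap edges $h_1,h_2$, so that apart from $h_1,h_2$ each $P_\nu$ uses only the four directions $2\nu-1,2\nu,2\nu+1,2\nu+2$ and meets the major side of $I_\nu$ near the endpoints of $I_\nu$. This forces (i) no $P_\nu$ to contain any edge of $B$ parallel to a backbone edge, and (ii) any two $P_\nu$'s to be edge-disjoint outside $\{h_1,h_2\}$, so each of the $\alpha+\delta$ remaining edges of $B$ can block at most one $P_\nu$. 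Pigeonhole then yields an unblocked path without ever locating the diagonals of $B$. To rescue your approach you would either have to import this counting idea or carry out, in full, the case analysis you sketch --- which would amount to reproving the hard part from scratch.
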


\begin{proof}
Suppose, on the contrary, that $B$ misses two edges of its backbone: $h_1=[\beta', \beta'+1]$ and $h_2=[\gamma', \gamma'+1]$ (where $\alpha < \beta' < \gamma' < m-\delta -1$), and possibly some other edges as well. Note that this is possible only if $\alpha + \delta \leq m-4$.
We define $\alpha + \delta +1$ SHP's $P_\nu$, for $-\delta \leq \nu \leq \alpha$, with the following properties:
\begin{enumerate}
\item These $P_{\nu}$'s use none of the edges $e_{\alpha+1},e_{\alpha+2},\ldots,e_{m-\delta}$ (i.e., the edges of $B$ which either belong to the backbone of $B$ or are parallel to an edge in the backbone of $B$).

\item Each of the $\alpha+\delta$ remaining edges of $B$, i.e., $e_1,\ldots,e_\alpha$ and $e_{m-\delta +1},\ldots,e_m$, blocks at most a single $P_\nu$.
\end{enumerate}
By the pigeonhole principle, this will imply that at least one $P_\nu$ is not blocked by $B$, hence $B$ is not a blocker.

\medskip The SHP $P_{\nu}$ is composed of five sets of edges, as follows:
\begin{itemize}
\item Initial Section: $\langle \nu, \nu-1, \nu+1, \nu-2, \nu+2, \ldots, \beta' \rangle$. This section consists of $2 (\beta' - \nu)$ edges, of directions $2\nu-1$ and $2\nu$ alternately.

\item First Junction: $h_1=[\beta',\beta'+1]$.

\item Middle Section: $\langle \beta'+1,2\nu-\beta'-1,\beta'+2,\ldots,\gamma' \rangle$. This section consists of $2 (\gamma'-\beta' - 1)$ edges, of directions $2\nu$ and $2\nu+1$ alternately. Note that this section is empty if $\gamma'=\beta'+1$, i.e., if the two holes are consecutive.

\item Second Junction: $h_2=[\gamma',\gamma'+1]$.

\item Terminal Section: $\langle \gamma'+1,2\nu-\gamma',\gamma'+2,\ldots,\nu+m+1,\nu+m \rangle$. This section consists of $2 (\nu+m-\gamma'-1)$ edges, of directions $2\nu+1$ and $2\nu+2$ alternately.
\end{itemize}
Four examples of $P_{\nu}$ are presented in Figure~\ref{Fig:4-SHP's-2-holes}.

\medskip We denote by $I_\nu$ the main diagonal $[\nu, \nu +m]$ of $P$. (The labels of the vertices are elements of $\mathbb{Z}_{2m-1}$, so $\nu$ is replaced by $2m-1+\nu$ for $-\delta \leq \nu < 0$).
The \emph{major side of $I_\nu$} is the closed half-plane bounded by $\mathrm{aff}(I_\nu)$ that includes the boundary path $\langle \nu, \nu+1, \ldots, \nu+m \rangle$ of length $m$. The \emph{minor side of $I_\nu$} is the complementary closed half-plane that includes the boundary path $\langle \nu+m, \nu+m+1, \ldots, \nu+2m-1 \rangle$ of length $m-1$. Note that as $\nu$ increases from $- \delta$ to $\alpha$, $\nu + m$ increases from $m- \delta$ to $m+ \alpha$. ($m+ \alpha \leq 2m-1-\delta -3$, since $\alpha + \delta \leq m-4$). It follows that the backbone $\langle \alpha, \ldots, m- \delta \rangle$ of $B$ lies on the major side of $I_\nu$ for all $-\delta \leq \nu \leq \alpha$. More precisely, all points of the backbone except possibly the endpoints $\alpha$ and $m-\delta$, lie in the interior of the major side of $I_\nu$. The two boundary edges of $P_\nu$, other than $h_1$ and $h_2$, namely $[\nu-1,\nu]$ and $[\nu+m,\nu+m+1]$, lie on the minor side of $I_\nu$.

\begin{claim}
The SHP's $P_{-\delta}, P_{-\delta+1},\ldots,P_{\alpha}$ satisfy the aforementioned Properties~1 and~2.
\end{claim}

\medskip \noindent {\it Proof of~(1).} For each $\nu$, the directions of the edges of $P_{\nu}$, except for the edges $h_1,h_2$, are $2\nu-1,2\nu,2\nu+1,2\nu+2$.
These are the directions of the four boundary edges incident to the endpoints of $I_{\nu}$, namely: $[\nu-1,\nu],[\nu,\nu+1],[\nu+m-1,\nu+m],[\nu+m,\nu+m+1]$.
Suppose, on the contrary, that some edge $e$ of $P_{\nu}$ belongs to $B$ and is either equal or parallel to an edge of the backbone $\langle \alpha, \ldots, m-\delta \rangle$ of $B$.

\medskip \noindent {\it Case~(I) -- $e$ is a boundary edge.} Then $e$ is neither $h_1$ nor $h_2$, since $h_1$ and $h_2$ are not in $B$. Thus $e$ is a boundary edge that lies on the minor side of $I_{\nu}$ and is incident to one of the two endpoints of $I_{\nu}$. In particular, $e$ is not an edge of the backbone, and is not parallel to an edge of the backbone. (Two distinct boundary edges of an odd-sided polygon are never parallel.)

\medskip \noindent {\it Case~(II) -- $e$ is a diagonal of $\mathrm{conv}(V(G))$.} Then $e$ is parallel (not equal) to some edge $[\iota,\iota+1]$, $\alpha \leq \iota <m-\delta$, of the backbone. Since $e \in B$, necessarily $[\iota,\iota+1] \notin B$, i.e., $[\iota,\iota+1]$ is a `hole' in the backbone ($h_1$, $h_2$, or possibly a third hole). It follows that $[\iota,\iota+1]$ is an internal edge of the backbone, i.e., $\alpha +1 \leq \iota <m-\delta -1$. Thus $[\iota,\iota+1]$ lies in the interior of the major side of $I_{\nu}$. But $e$, being an edge of $P_{\nu}$ other than $h_1$ and $h_2$, is also parallel to a boundary edge that is incident with an endpoint of $I_{\nu}$. This is clearly impossible.


\begin{figure}[tb]
\begin{center}
\scalebox{1.0}{
\includegraphics{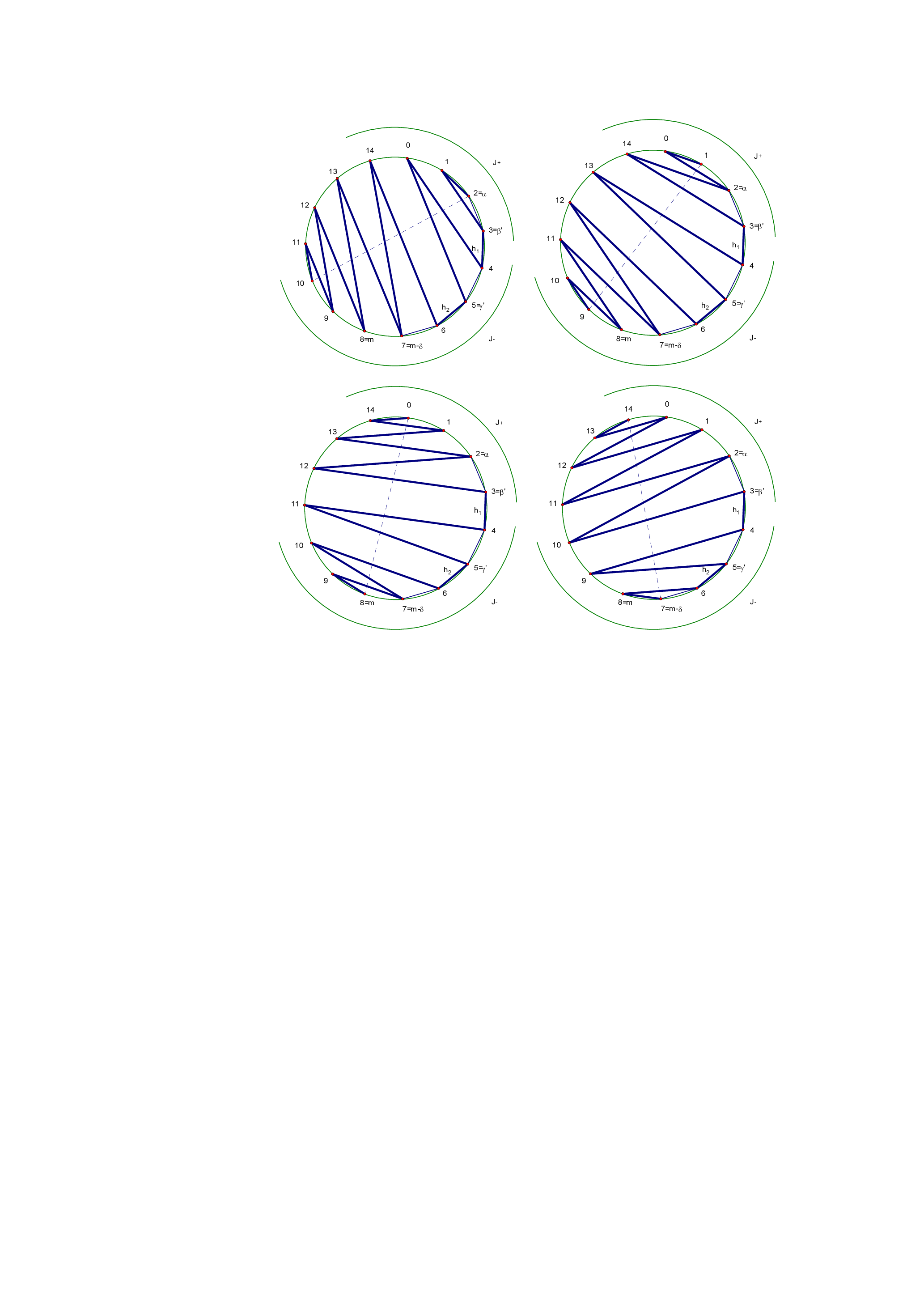}
} \caption{An illustration to the proof of Proposition~\ref{Prop:Boundary}. In all four sub-figures, the underlying graph is $G=CK(15)$, and we have $\alpha=2, \beta'=3,\gamma'=5,$ and $\delta=1$. The sub-figures present the SHP's $P_2,P_1,P_{-1},P_0$ in a clockwise order, starting with the left upper sub-figure. In each SHP, the corresponding diagonal $I_{\nu}$ is depicted by a punctured line. \textbf{To add `primes' to $\beta,\gamma$} }
\label{Fig:4-SHP's-2-holes}
\end{center}
\end{figure}

%
%

\medskip \noindent {\it Proof of~(2).} The union of the (closed) major sides of all the main diagonals $I_\nu$ $(-\delta \leq \nu \leq \alpha)$ is the boundary path $\langle 2m-1-\delta, \ldots,0, \ldots, m+\alpha \rangle$, of length $\delta+m+\alpha \leq 2m-4$. We call this path $J$, and split it into two vertex-disjoint parts: the upper part $J^{+}=\langle 2m-1-\delta, \ldots,0, \ldots, \beta' \rangle$ and the lower part $J^{-}=\langle \beta'+1, \ldots, m+\alpha \rangle$. Each edge of $P_\nu$ (except $h_1$ and $h_2$) has one vertex (call it the left one) in the interior of the minor side of $I_\nu$, and another vertex (call it the right one) in the closed major side of $I_\nu$. Note that in $P_{\nu}$, the right endpoint of each edge in the initial section lies in the upper part $J^+$ of $J$, whereas the right endpoint of each edge in the middle section and in the terminal section lies in the lower part $J^-$ of $J$.

It is clearly sufficient to show that for any $-\delta \leq \nu_1< \nu_2 \leq \alpha$ we have $P_{\nu_1} \cap P_{\nu_2} = \{h_1,h_2\}$. (Here $P_{\nu_1} \cap P_{\nu_2}$ denotes the set of edges (not the set of points) common to $P_{\nu_1}$ and $P_{\nu_2}$.) As for each $\nu$, the directions of the edges of $P_{\nu}$ (except for the edges $h_1,h_2$) are $2\nu-1,2\nu,2\nu+1,2\nu+2$, we may (potentially) have $P_{\nu_1} \cap P_{\nu_2} \neq \{h_1,h_2\}$ only if $\nu_2=\nu_1+1$ (as otherwise, there is no overlap between the directions of the edges that participate in these SHP's), and the joint edge must be of direction either $2\nu_2$ or $2\nu_2-1$. However, the edges of directions $2\nu$ and $2\nu-1$ in $P_{\nu}$ meet the major side of $I_{\nu}$ in $\{\nu,\nu+1,\ldots,\beta'\} \subset J^+$, while the edges of directions $2\nu$ and $2\nu-1$ in $P_{\nu-1}$ meet the major side of $I_{\nu-1}$ in $\{\gamma'+1,\gamma'+2,\ldots,m+\nu-1\} \subset J^-$. Thus, these SHP's cannot have an edge in common. This completes the proof of the claim, and therefore, also of Proposition~\ref{Prop:Boundary}.
\end{proof}

\section{The Diagonals of a Blocker -- Basic Restrictions}
\label{sec:Class_A}

In this section we prove basic restrictions that the diagonals of a blocker for SHP's in $CK(2m-1)$ must satisfy. These restrictions apply for blockers of both Class A and Class B. In the case of Class A, these constitute all restrictions stated in our main theorem, and they are sufficient, as we prove in Section~\ref{sec:converse}. (Hence, this section completes the proof of the `necessity' direction of our main theorem for blockers of Class~A.) For blockers of Class~B, there are additional restrictions which we will prove in the next section.

\medskip Recall that so far, we have shown that any blocker $B$ consists of $m$ edges of different directions, that these directions are those of a boundary path of length $m$ (w.l.o.g., $\langle 0,1,\ldots,m \rangle$), and that the boundary edges of $B$ form a boundary path $\langle \alpha,\alpha+1,\ldots,m-\delta \rangle$, which we call `the backbone of $B$', except for (possibly) a single missing edge which we denote $[\alpha+\beta,\alpha+\beta+1]$.


\begin{figure}[tb]
\begin{center}
\scalebox{1.0}{
\includegraphics{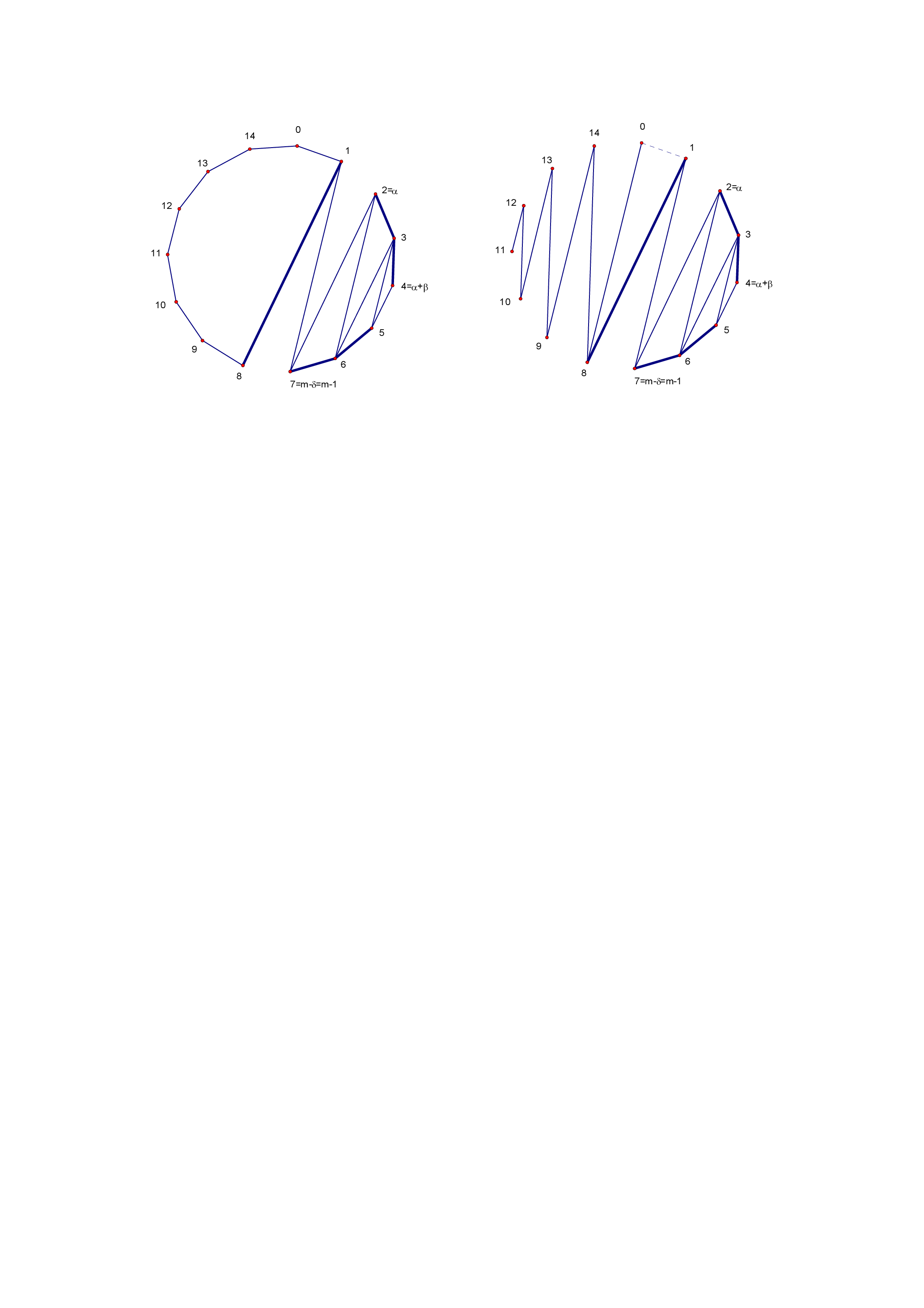}
} \caption{The left sub-figure is an illustration to the proof of Proposition~\ref{prop: beams}, and the right sub-figure is an illustration to the proof of Proposition~\ref{Prop:Aux-Class2}.} 
\label{fig:far_edge}
\end{center}
\end{figure}

Now we show that for any blocker $B$, each edge of $B$ that is not parallel to an edge in its backbone connects an internal vertex of the path $\langle \alpha,\alpha+1,\ldots,m-\delta \rangle$ with an internal vertex of the complementary path $\langle m-\delta,\ldots,2m-2,0,1,\ldots,\alpha \rangle$.

\begin{proposition}
\label{prop: beams}
Let $B$ be a blocker, and let $e \in B$ be an edge that is parallel to one of the $\alpha+\delta$ boundary edges $[0,1],\ldots,[\alpha-1,\alpha],[m-\delta,m-\delta+1],\ldots,[m-1,m]$. Put $A=\langle \alpha,\alpha+1,\ldots,m-\delta \rangle$ and $\bar{A}=\langle m-\delta,\ldots,2m-2,0,1,\ldots,\alpha \rangle$. Then $e$ connects an internal vertex of $A$ to an internal vertex of $\bar{A}$.
\end{proposition}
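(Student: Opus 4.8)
The plan is to show by contradiction that if some edge $e\in B$ parallel to one of the extremal boundary directions $[0,1],\ldots,[\alpha-1,\alpha]$ or $[m-\delta,m-\delta+1],\ldots,[m-1,m]$ fails to join an internal vertex of $A$ to an internal vertex of $\bar A$, then we can exhibit an SHP that avoids all of $B$. By the symmetry $i\mapsto m-i$ we may assume $e$ is parallel to $[k-1,k]$ for some $1\le k\le\alpha$, i.e.\ $e=e_k$ has direction $2k-1$. Since $e_k$ is \emph{not} a boundary edge (the boundary edge in direction $2k-1$ would be $[k-1,k]$, which lies before the backbone and hence is not in $B$), $e_k=[k-1-t,\,k+t]$ for some $t\ge1$; write $e_k=[p,q]$ with $p=k-1-t$, $q=k+t$, so $0\le k-1$ and $q=p+2t+1$. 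What we must rule out is exactly the situation in which $e_k$ does \emph{not} connect an interior vertex of $A$ to an interior vertex of $\bar A$ — equivalently, the situation in which \emph{both} endpoints of $e_k$ lie (weakly) on the same side, i.e.\ both in $\bar A=\langle m-\delta,\ldots,2m-2,0,1,\ldots,\alpha\rangle$, since $q=k+t>\alpha$ would force $q$ into $A$. So assume for contradiction that $q\le\alpha$ (both endpoints of $e_k$ lie in $\langle 0,1,\ldots,\alpha\rangle$), the ``bad'' case depicted in the left part of Figure~\ref{fig:far_edge}.

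The main step is to build, for this bad configuration, a family of SHP's none of which uses any backbone-parallel edge of $B$, and such that each of the $\alpha+\delta$ non-backbone edges of $B$ blocks at most one of them — with the crucial extra feature that $e_k$ itself blocks \emph{none} of them. Because $e_k=[p,q]$ with $q\le\alpha$ has both endpoints ``before'' the backbone, its direction $2k-1$ is small, and we can use the two main-diagonal / zig-zag machinery from the proof of Proposition~\ref{Prop:Boundary}: use diagonals $I_\nu=[\nu,\nu+m]$ with $\nu$ ranging over an interval that \emph{excludes} the values $k-1$ and $k$ (the two boundary directions incident to which $e_k$ could appear in a zig-zag), so that $e_k$ never occurs as an edge of any $P_\nu$ in the family. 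Concretely I would take the SHP's to be (cyclic) zig-zag paths $P_\nu$ consisting of edges of directions $2\nu-1,2\nu,2\nu+1,2\nu+2$ only — as in Section~\ref{sec:boundary} — routed around the polygon so that their two boundary edges lie on the minor side of $I_\nu$ and all of the backbone lies strictly on the major side; then by the same two-part argument (Case (I): a boundary edge of $P_\nu$ is incident to an endpoint of $I_\nu$, hence not a backbone edge and not parallel to one; Case (II): a diagonal of $P_\nu$ is parallel to a backbone edge only if that backbone edge is ``missing'' and interior, which is impossible because interior points of the backbone lie strictly on the major side) we get Property~1, and by the $J^+/J^-$ separation argument we get that distinct $P_\nu$'s share at most the junction edges, giving Property~2. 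Counting: if the family has $\alpha+\delta+1$ members and the $\alpha+\delta-1$ edges of $B$ other than $e_k$ and (one more we can afford to drop) each block $\le 1$ of them, pigeonhole yields an unblocked SHP, contradicting that $B$ is a blocker. The bookkeeping of exactly which $\nu$-interval to use, and how to account for $e_k$ lying ``too far out,'' is what has to be done carefully; the geometric content is already contained in Section~\ref{sec:boundary}.

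I expect the main obstacle to be precisely this bookkeeping: choosing the index range for $\nu$ so that (a) the family has enough members for pigeonhole to bite, (b) $e_k$ genuinely cannot appear in any $P_\nu$ (this is where $q\le\alpha$, i.e.\ $e_k$ being ``on the wrong side,'' is used — its direction is outside the band of directions realized, or its endpoints are forced onto the major side in a way that contradicts the minor/major split), and (c) every other non-backbone edge of $B$ still blocks at most one $P_\nu$. A secondary subtlety is handling the endpoints $\alpha$ and $m-\delta$ of the backbone, which lie \emph{on} (not strictly inside) the major side of the extremal $I_\nu$'s — the statement's ``internal vertex'' wording is exactly designed to dodge this, and the proof will need to check that an edge incident to $\alpha$ or $m-\delta$ but otherwise ``reaching into $\bar A$'' is not ruled out, whereas one with both endpoints in the open arc $\langle 0,\ldots,\alpha-1\rangle\cup\langle\alpha\rangle$ before the backbone is. Once the right $\nu$-range is fixed, the verification of Properties~1 and~2 is a direct transcription of the corresponding claim in the proof of Proposition~\ref{Prop:Boundary}, with $h_1,h_2$ replaced by the (at most one) actual hole of the backbone, or by no holes at all if the backbone is unbroken.
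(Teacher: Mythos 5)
There is a genuine gap, and it occurs before the construction even starts: your case analysis is incomplete. Writing $e=e_k=[k-1-t,k+t]$ with $1\le k\le\alpha$ and $1\le t\le m-2$, the endpoint $p=k-1-t$ is indeed always an internal vertex of $\bar A$, so the proposition fails precisely when $q=k+t\notin\{\alpha+1,\ldots,m-\delta-1\}$, i.e.\ when $q\le\alpha$ \emph{or} $q\ge m-\delta$. Your assertion that ``$q=k+t>\alpha$ would force $q$ into $A$'' is false: the edge can overshoot the backbone entirely and land on the far portion of $\bar A$, or on the backbone's endpoint $m-\delta$. Concretely, in $CK(15)$ with backbone $\langle 2,\ldots,6\rangle$ (so $\alpha=\delta=2$), the direction-$1$ candidates $[10,6]$ and $[9,7]$ both violate the conclusion of the proposition, yet both have $q>\alpha$, so your argument never excludes them. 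These configurations are not reachable from the $q\le\alpha$ case by the symmetry $i\mapsto m-i$ either, so an entire family of bad positions is left untreated.

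Even in the subcase $q\le\alpha$ you have a plan rather than a proof. The family $P_\nu$ in the proof of Proposition~\ref{Prop:Boundary} is built around \emph{two} missing backbone edges $h_1,h_2$ serving as junctions; after Section~\ref{sec:boundary} the backbone has at most one hole, so that construction does not transfer as written, and you explicitly defer the parts that would have to change (``the bookkeeping of which $\nu$-interval to use''). The counting is also never pinned down and is internally inconsistent: you first propose excluding $\nu\in\{k-1,k\}$ from the range (which leaves $\alpha+\delta-1$ paths against $\alpha+\delta-1$ remaining non-backbone edges, so pigeonhole gives nothing), then speak of $\alpha+\delta+1$ paths and ``$\alpha+\delta-1$ edges plus one more we can afford to drop'' without identifying that edge. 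Most importantly, you never explain where the \emph{badness} of $e_k$'s position (rather than its mere existence) enters; if the family really had the stated properties independently of where $e_k$ sits, the same pigeonhole would prove $B$ is never a blocker. The paper's argument avoids all of this and handles every bad position of $e$ uniformly with a single explicit SHP: the endpoints of $e$ cut the boundary circuit into arcs $C\supseteq\langle\alpha,\ldots,m-\delta\rangle$ and $\bar C$; some direction $x$ adjacent to that of $e$ is unused by $B$; a zig-zag over the interior of $C$ alternating the direction of $e$ with $x$, concatenated with the boundary path of $\bar C$, misses $B$. (The complementary case, where $e$ joins two vertices of $A$, is disposed of by a one-line parity-of-direction argument.) I would abandon the $P_\nu$ machinery here and argue along those lines.
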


\begin{proof}
Assume, on the contrary, that the assertion fails, and thus, $e$ connects either two vertices of $A$ or two vertices of $\bar{A}$. (Note that $A$ and $\bar{A}$ meet at $\alpha$ and at $m-\delta$.) We consider two cases:

\mn \emph{Case~1: $e$ connects two vertices of $A$.} In this case, $e$ is parallel either to an edge of $A$, contrary to our initial assumption on $e$, or to a diagonal of order $2$ of $A$, which means that the direction of $e$ is $2i$, for some $\alpha < i< m-\delta$. However, $2i$ is not a direction of an edge of $B$. (Recall that the directions of edges of $B$ are $0,1,3,5,\ldots,2m-3 \mod (2m-1)$.) This contradicts the assumption $e \in B$.

\mn \emph{Case~2: $e$ connects two vertices of $\bar{A}$.} In this case we construct an SHP that misses $B$, as follows.

The endpoints of $e$ divide the boundary circuit of $P$ into two complementary closed arcs, $C$ and $\bar{C}$ (each containing the endpoints of $e$). Suppose $C$ includes the boundary arc $\langle \alpha, \ldots, m-\delta \rangle$. As $B$ contains an edge in the direction of $e$, at least one of the directions adjacent (in $\mathbb{Z}_{2m-1}$) to the direction of $e$ is not a direction of an edge of $B$. (Recall that $B$ contains only two adjacent directions, 0 and 1.) Call such a direction $x$. There is a zig-zag path $Z$ that uses the direction of $e$ and direction $x$ alternately, misses $e$, and covers all internal vertices of $C$ and one endpoint of $C$. The union of $Z$ and the boundary path of $\bar{C}$ is an SHP that avoids $B$ altogether.
(For example, in the case $G=CK(15)$, with $\alpha=2,\delta=1,$ and $e=[1,8]$, we have $C=\langle 1, \ldots, 8 \rangle$ and direction $x=8$ (which is adjacent to the direction $9$ of $e$) does not appear in $B$. Hence, we can define $Z=\langle 4,5,3,6,2,7,1 \rangle$ and obtain the SHP $\langle 4,5,3,6,2,7,1,0,14,13,12,11,10,9,8 \rangle$ that misses $B$, see the left part of Figure~\ref{fig:far_edge}.)
\end{proof}

We have shown that for any blocker $B$, each edge of $B$ that is not parallel to an edge in its backbone connects an internal vertex of the path $\langle \alpha,\alpha+1,\ldots,m-\delta \rangle$ with an internal vertex of the complementary path. We call such an edge a \emph{beam} of the blocker that \emph{emanates} from an internal point of $\langle \alpha,\alpha+1,\ldots,m-\delta \rangle$, and characterize all the possible sets of beams of a blocker. The first part of this characterization (Proposition~\ref{prop:go-far}) holds for blockers of both classes A and B. In this part we show that any such two beams do not intersect and in fact, turn away from each other. (Namely, the distance between the points they emanate from, is smaller than the distance between their other endpoints.) In the next section we present the second part of the characterization, which holds for blockers of Class~B.

\begin{proposition}
\label{prop:go-far}
Let $B$ be a blocker, and let $[i,j],[k,l] \in B$ be such that $\alpha < i < k < m - \delta$, $j,l \not \in \{ \alpha, \alpha+1, \ldots , m-\delta  \}$.
Then:
\begin{enumerate}
\item The beams $[i,j]$ and $[k,l]$ do not cross, i.e., the points $i,k,l,j$ appear in this order on the boundary of $\mathrm{conv}(V(G))$.
\item These two beams turn away from each other, i.e., $(k-i)$ is smaller than the distance between $l$ and $j$.
\end{enumerate}
\end{proposition}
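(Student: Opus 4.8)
The plan is to prove both parts by constructing, for each "bad" configuration, an explicit simple Hamiltonian path that avoids all of $B$, thereby contradicting the assumption that $B$ is a blocker. The tool is the same one used in Proposition~\ref{prop: beams}: zig-zag paths that use only two alternating directions, glued to boundary paths along the arcs cut out by the beams. Throughout, recall that the directions present in $B$ are $0,1,3,5,\ldots,2m-3 \pmod{2m-1}$ (one boundary-consecutive run corresponding to the directional support $\langle 0,1,\ldots,m\rangle$), so $B$ contains only a single pair of adjacent directions, namely $0$ and $1$; hence for any edge of $B$ of direction $d\notin\{0,1\}$, at least one of $d-1,d+1$ is not a direction used by $B$.

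For part~(1), suppose the beams cross, so the cyclic order is $i,k,j,l$. Both beams emanate from internal vertices of the backbone $A=\langle \alpha,\ldots,m-\delta\rangle$, and both other endpoints $j,l$ lie in the complementary arc $\bar A$. The two beams divide the boundary of $\mathrm{conv}(V(G))$ into four arcs; I would pick the arc strictly between $i$ and $k$ on the backbone side and the arc strictly between $l$ and $j$ on the $\bar A$ side, cover the interior vertices of each by a zig-zag in the direction of the corresponding beam alternated with an available neighboring direction (which exists by the remark above, since neither $[i,j]$ nor $[k,l]$ has direction in $\{0,1\}$ — they are beams, hence parallel to one of the $\alpha+\delta$ outer boundary edges, whose directions are odd $\geq 3$ or of the form "large even"), and then join the two remaining boundary arcs $\langle k,\ldots,j\rangle$ and $\langle l,\ldots,i\rangle$ as boundary paths. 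The four pieces can be concatenated into one simple Hamiltonian path missing $[i,j]$ and $[k,l]$, and missing every other edge of $B$ because the zig-zags use directions not in $B$ while the boundary pieces, being subpaths of the boundary, only contribute boundary edges which we ensure lie outside the backbone (this is why we take the arcs \emph{strictly between} consecutive endpoints). So $B$ is not a blocker, a contradiction; hence $i,k,l,j$ appear in this cyclic order.

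For part~(2), assume the beams are non-crossing (by part~(1)) but do \emph{not} turn away from each other, i.e., writing $p$ for the distance from $l$ to $j$ along the arc $\bar A$, we have $k-i \geq p$. Geometrically this means the "inner" quadrilateral/trapezoid bounded by the two beams and the two arcs $\langle i,\ldots,k\rangle$, $\langle l,\ldots,j\rangle$ is "wider on the backbone side"; equivalently the direction of $[k,l]$ minus the direction of $[i,j]$ has the wrong sign relative to what Class~A/B permits. The key geometric fact to extract is that under $k-i\geq p$, the segment $[i,l]$ (or symmetrically $[k,j]$) together with the two short arcs realizes a \emph{crossing-free} replacement, and more importantly there is room to run a zig-zag on the arc between the beams while rerouting around them. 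Concretely, I would build an SHP by: taking the boundary path of the arc $\bar A$ outside $\langle l,\ldots,j\rangle$; running a zig-zag across the region between $l$ and $j$ using the direction of $[i,j]$ (or $[k,l]$) alternated with a free neighboring direction; crossing over via a boundary edge near $k$ or $\alpha$; and running a complementary zig-zag across $\langle i,\ldots,k\rangle$ — the inequality $k-i\geq p$ is exactly what guarantees this second zig-zag has enough vertices to "swallow" the overshoot and close up into a single path. Verifying it avoids $B$ uses, as before, that the zig-zag directions are absent from $B$ and the boundary edges used lie strictly outside $A$.

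The main obstacle I anticipate is \textbf{part~(2)}, and specifically the bookkeeping of \emph{which} boundary edges the constructed SHP uses: the path must avoid not only $[i,j]$ and $[k,l]$ but every edge of $B$, including the backbone edges and the beams other than these two. Keeping the zig-zags confined to the open arcs strictly between consecutive relevant endpoints — so that the only boundary edges touched are outside the backbone — is the delicate point, and the inequality in the hypothesis of part~(2) has to be used precisely to make the two zig-zags "meet" at a single vertex (parity!) so that the concatenation is a genuine Hamiltonian \emph{path} rather than two disjoint paths or a path plus a stray vertex. I would handle the parity by choosing the free neighboring direction on each side appropriately (there are often two choices for $x$, as in the proof of Proposition~\ref{prop: beams}), and I would organize the case analysis by whether $j,l$ are on the same side of the main diagonal through $i$ or $k$, mirroring the initial/middle/terminal-section decomposition already used in the proof of Proposition~\ref{Prop:Boundary}. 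A small worked instance in $CK(15)$, analogous to the one given at the end of the proof of Proposition~\ref{prop: beams}, should make the construction transparent.
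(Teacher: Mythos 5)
Your overall strategy --- deriving a contradiction by exhibiting an explicit SHP that avoids $B$, built from zig-zag paths glued to boundary arcs, and exploiting the structure of the direction set $\{0,1,3,\dots,2m-3\}$ of $B$ --- is exactly the paper's, and you correctly locate the crux in making the pieces fit (and in using the failed inequality of part~(2) to do so). However, the constructions you describe do not work, for concrete reasons. First, a zig-zag path alternating direction $d$ with $d\pm1$ necessarily covers an arc centred at the ``apex'' $d/2$ of that direction (it pairs $a$ with $d-a$, $a+1$ with $d-a-1$, and so on); it cannot cover ``the arc strictly between $i$ and $k$'' (centred near $(i+k)/2$) while using the direction $i+j$ of the beam $[i,j]$ --- its second vertex is already $j-1$, far outside that arc. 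The same objection applies to your zig-zag between $l$ and $j$ and to both zig-zags in your sketch of part~(2). Second, your boundary piece $\langle k,\dots,j\rangle$ must traverse the backbone edges $[k,k+1],\dots,[m-\delta-1,m-\delta]$, most of which belong to $B$; the parenthetical claim that taking arcs ``strictly between consecutive endpoints'' keeps the boundary pieces off the backbone is false, since $k$ is an internal backbone vertex while $j$ is not. Third, the assertion that the zig-zags avoid $B$ ``because the zig-zags use directions not in $B$'' cannot be saved: every odd direction is a direction of $B$, so every pair of adjacent directions meets the direction set of $B$. One must instead verify that the \emph{particular} edges of $B$ in the directions used (the two beams and the backbone edges) are not edges of the zig-zag, which is a statement about which side of the relevant chord the zig-zag lives on.

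The idea that is missing, and that lets the paper treat crossing, touching ($j=l$) and approaching beams uniformly, is to translate $[k,l]$ to the parallel edge $[i,s]$ through $i$, i.e.\ $s=k+l-i=l+(k-i)$. Precisely when the two beams fail to be disjoint and turning away from each other, $s$ lands strictly beyond $j$, as an internal vertex of the arc $C^{+}$ cut off by $[i,j]$ on the side of $\alpha$ (and, one checks, outside the backbone). The SHP is then assembled from three pieces: a zig-zag in directions $i+j$ and $i+j\pm1$ covering the interior of the complementary arc $C^{-}$ --- this is the arc such a zig-zag actually covers --- ending near $j$; the boundary path from $j$ (or $j-1$) to $s$ (or $s-1$), which lies entirely in $\bar A$ and hence uses no edge of $B$; and a zig-zag parallel to $[k,l]$ covering $\langle s,\dots,i\rangle$, which avoids $[k,l]$ itself because $k\notin\langle s,\dots,i\rangle$. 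The violated hypothesis is exactly what makes the middle boundary path nonempty and correctly oriented --- the ``fitting'' issue you anticipated but did not resolve. As written, the proposal has a genuine gap.
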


\begin{proof}
Assume, on the contrary, that $[i,j]$ and $[k,l]$ are two edges of $B$ with $\alpha < i < k < m - \delta$, and $j,k  \in V \setminus \{ \alpha, \alpha+1, \ldots , m-\delta  \}$, and that one of the situations shown in Figure~\ref{fig:beams_go_far} occurs, i.e., that these two edges either cross or meet at an endpoint $(j=l)$, or approach each other ($j \equiv l + \zeta (\bmod (2m-1))$ for some $0< \zeta < k-i$).

To arrive at a contradiction, we produce an SHP $P$ that misses $B$ altogether. $P$ consists of two zig-zag paths $P_1$, $P_3$ connected by a boundary path $P_2$.

\emph{The path $P_1$.} The edge $[i,j]$ splits the boundary circuit of $\mathrm{conv}(V)$ into two closed paths $C^{+}$ and $C^{-}$, with $\alpha \in C^{+}$, $m- \delta \in C^{-}$. The direction of $[i,j]$ is $b \equiv i+j (\bmod (2m-1))$. One of the directions adjacent to $b$, (i.e., $b'=b+1$ or $b'=b-1$), is not a direction of edges in $B$. Let $Q$ be a zig-zag path consisting of edges of directions $b$ and $b'$ alternately that covers (exactly) all the internal vertices of $C^{-}$. One endpoint of $Q$ is either $i+1$ or $j-1$. If it happens to be $j-1$, put $P_1=Q$ (see Figure~\ref{fig:beams_two_zigzag}).
If it is $i+1$, then the (unique) edge of $Q$ that contains $i+1$ must be $[j-1,i+1]$, and therefore, the next edge of $Q$ is $[j-1,i+2]$, i.e., $b'=b+1$. In this case add to  $Q$ the edge $[i+1,j]$, and call the resulting path $P_1$. Thus $P_1$ is a zig-zag path with endpoint $j$ that covers all vertices of $C^{-}$ except $i$ (see Figure~\ref{fig:beams_two_zigzag2}).

\emph{The path $P_3$.} Let $[i,s]$ be the edge through $i$ parallel to $[k,l]$. Note that $s$ is an internal vertex of $C^{+}$. (If the line through $i$ parallel to $[k,l]$ does not meet another vertex, then $k+l \equiv 2i (\bmod (2m-1))$, but $2i$, being a positive even number, is not a direction of an edge of $B$. If $s$ is not in $C^{+}$, then $s$ lies between $i$ and $k$, and thus $[k,l]$ is either parallel to a boundary edge of $B$, or to a 2-diagonal of the path $\langle \alpha, \ldots, m-\delta \rangle$, and is not parallel to any edge of $B$.)

If the direction of $[k,l]$ is not 0 (see Figure~\ref{fig:beams_two_zigzag}), define $P_3$ to be the zig-zag path that covers all vertices of the boundary path $\langle s, \ldots, i \rangle$, and uses (alternately) edges parallel to $[i,s]$ (i.e., to $[k,l]$) and edges of direction $k+l+1$, with one endpoint at $s$. (Note that if $a \neq 0$ and $B$ contains an edge of direction $a$, then $B$ does not contain edges of direction $a+1$.)

If the direction of $[k,l]$ is 0 (see Figure~\ref{fig:beams_two_zigzag2}), let $Q$ be a zig-zag path with edges of direction 0 and -1 that covers all vertices of the boundary path $\langle s, \ldots, i \rangle$, with endpoint $i$. Add to $Q$ the edge $[s-1,i]$ (of direction -1) to form the desired path $P_3$ with endpoint $s-1$.

\emph{The path $P_2$.} $P_2$ is the boundary path that connects the endpoint $j-1$ or $j$ of $P_1$, to the endpoint $s$ or $s-1$ of $P_3$.

It is easy to see that the constructed SHP misses $B$, as asserted.

\end{proof}

\begin{figure}[tb]
\begin{center}
\scalebox{1.0}{
\includegraphics{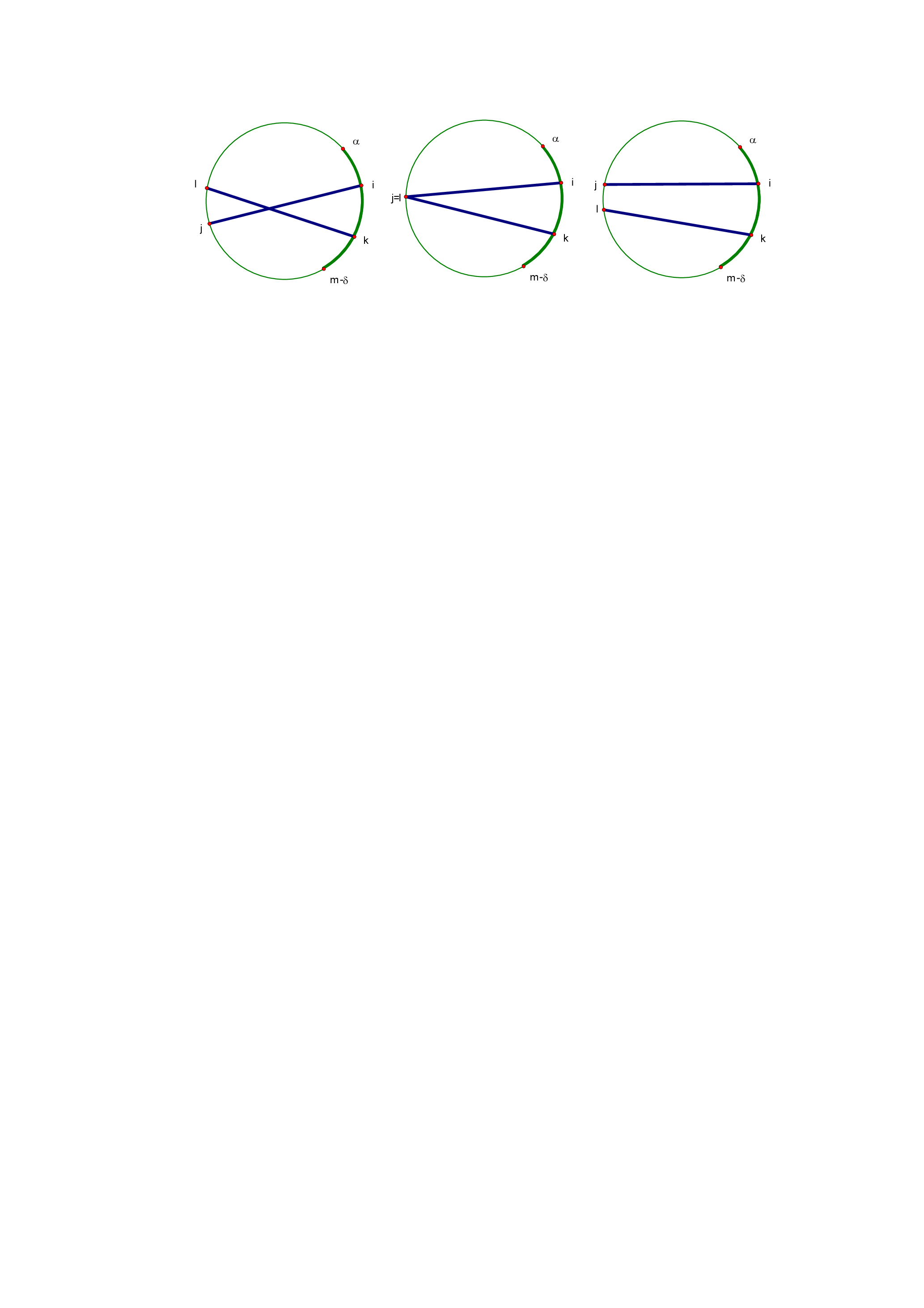}
} \caption{Illustration to the proof of Proposition~\ref{prop:go-far}.}
\label{fig:beams_go_far}
\end{center}
\end{figure}

\begin{figure}[tb]
\begin{center}
\scalebox{1.0}{
\includegraphics{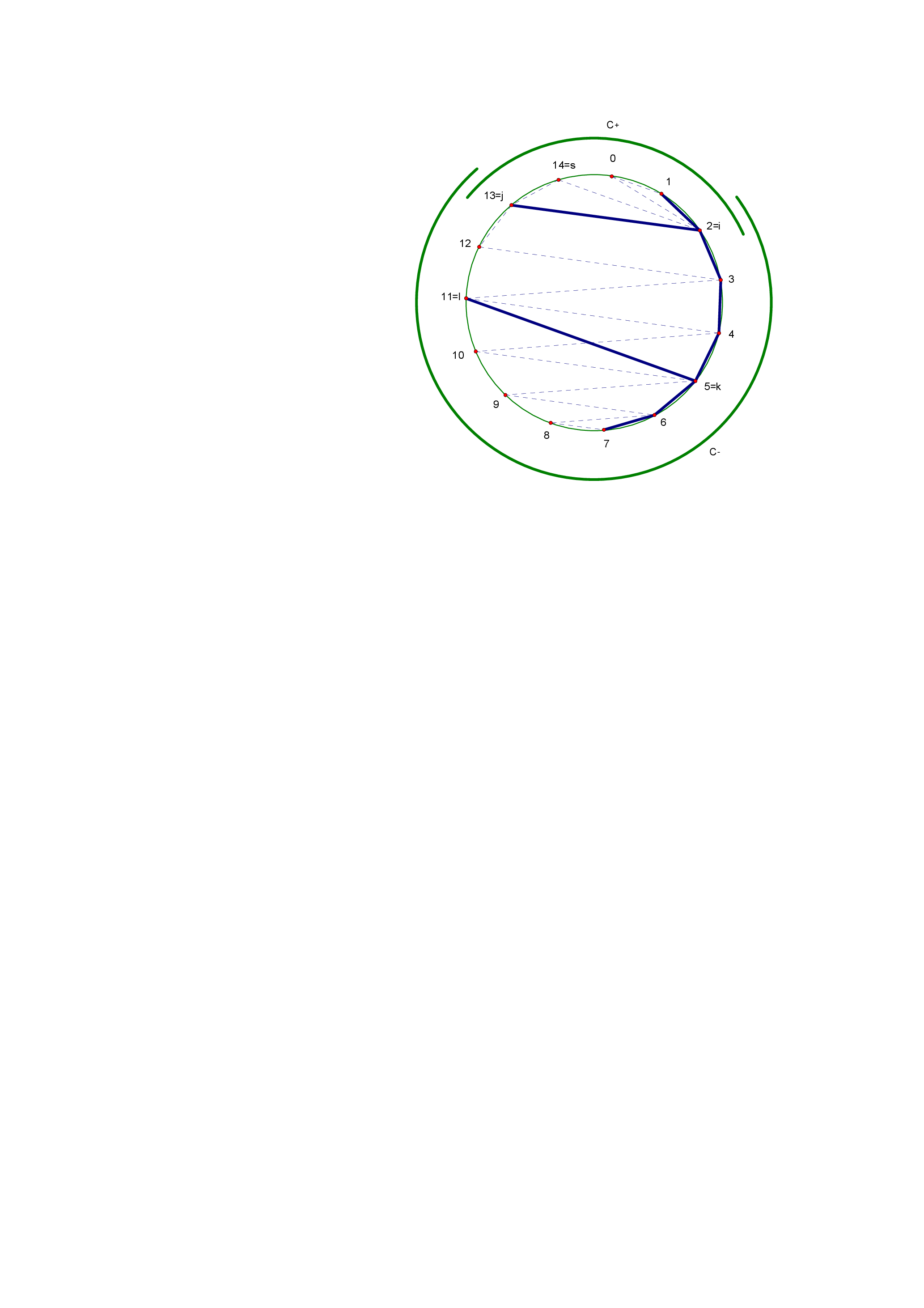}
} \caption{Illustration to the proof of Proposition~\ref{prop:go-far} in the case where the direction of $[k,l]$ is not $0$: $b=0$, $b'=b-1 \equiv 14$, $P_1= \langle 12,3,11,4, \ldots, 8,7 \rangle$, $P_3= \langle 14,2,0,1 \rangle$, $P_2= \langle 12,13,14 \rangle$.}
\label{fig:beams_two_zigzag}
\end{center}
\end{figure}

\begin{figure}[tb]
\begin{center}
\scalebox{1.0}{
\includegraphics{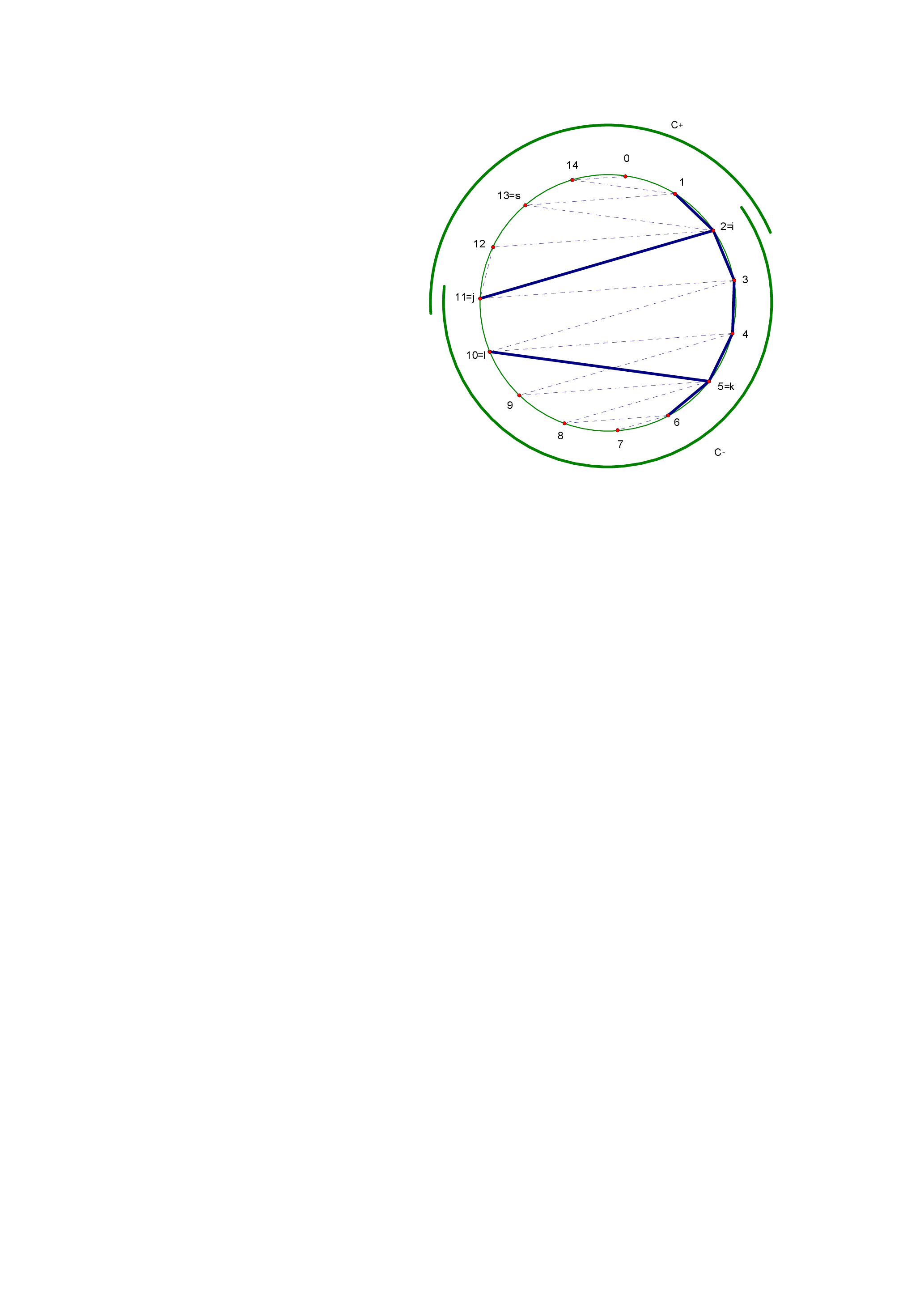}
} \caption{Illustration for the proof of Proposition~\ref{prop:go-far} in the case where the direction of $[k,l]$ is $0$: $b=13$, $b'=b+1 \equiv 14$, $P_1= \langle 11,3,10,4, \ldots, 6,7 \rangle$, $P_3= \langle 12,2,13,1,14,0 \rangle$, $P_2= \langle 11,12 \rangle$.}
\label{fig:beams_two_zigzag2}
\end{center}
\end{figure}

\section{The Diagonals of a Blocker of Class B - More Restrictions}
\label{sec:Class_B}

The restrictions described so far on the beams (the diagonals of a blocker) complete the characterization of the blockers of Class A; we proved that any blocker whose backbone is a single path is of the form described in Class A. In this section we prove that when this backbone contains a `hole' (as proved before - a single hole of length 1), the beams satisfy another requirement, as stated in the definition of Class B.

We start with determining the possible placements of the edge that `replaces' the missing edge in the backbone in a blocker of Class~B. We show that the edge that is parallel to the missing edge $[\alpha+\beta,\alpha+\beta+1]$ connects an internal vertex of the path $\langle \alpha,\alpha+1,\ldots,\alpha+\beta \rangle$ with an internal vertex of the path $\langle \alpha+\beta+1,\alpha+\beta+2,\ldots, m-\delta \rangle$.
\begin{proposition}\label{Prop:Aux-Class2}
Let $B$ be a blocker of Class~B, with backbone $\langle \alpha,\ldots,m-\delta \rangle$ and missing edge $[\alpha+\beta,\alpha+\beta+1]$. Then there exists a $\nu$, $1 \leq \nu  < \min(\beta,\gamma)$ (where $\beta$ is the length of the part of the backbone `above' the missing edge, and $\gamma=m-\delta-\alpha-\beta-1$ is the length of the part of the backbone `below' the missing edge), such that $[\alpha+\beta-\nu,\alpha+\beta+1+\nu] \in B$.
\end{proposition}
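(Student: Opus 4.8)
The plan is to assume for contradiction that the edge $e_{\alpha+\beta}$ of $B$ parallel to the missing backbone edge $[\alpha+\beta,\alpha+\beta+1]$ is \emph{not} of the required form, and to build from this a simple Hamiltonian path that avoids $B$. By Proposition~\ref{prop: beams} (or rather its proof technique), $e_{\alpha+\beta}$ already connects an internal vertex of the whole backbone $A=\langle\alpha,\ldots,m-\delta\rangle$ to an internal vertex of $\bar A$ — wait, that proposition was about edges parallel to $[0,1],\ldots,[\alpha-1,\alpha]$ or $[m-\delta,m-\delta+1],\ldots,[m-1,m]$, not about the missing backbone edge. So I would first establish the analogous statement for $e_{\alpha+\beta}$: it cannot join two vertices of the upper sub-path $\langle\alpha,\ldots,\alpha+\beta\rangle$ nor two vertices of the lower sub-path $\langle\alpha+\beta+1,\ldots,m-\delta\rangle$, because any chord of a boundary path has even direction unless it is a boundary edge or (for the missing edge's direction $2(\alpha+\beta)+1$, which is odd) it must ``straddle'' the missing slot; and it cannot have one endpoint strictly outside the backbone, say in $\bar A$, because then essentially the same zig-zag construction as in Case~2 of Proposition~\ref{prop: beams} produces an SHP missing $B$. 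Thus $e_{\alpha+\beta}=[\alpha+\beta-\nu,\alpha+\beta+1+\nu']$ with $\nu,\nu'\ge 1$, and since its direction is $2(\alpha+\beta)+1$ we get $\nu=\nu'$, giving the stated form $[\alpha+\beta-\nu,\alpha+\beta+1+\nu]$ automatically; the only thing left to force is the range $1\le \nu<\min(\beta,\gamma)$, i.e.\ that both endpoints of this chord are \emph{internal} vertices of the respective sub-paths.

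The heart of the argument is therefore: show $\nu\le\beta-1$ and $\nu\le\gamma-1$ (the strict inequalities $\nu<\beta$, $\nu<\gamma$). Suppose $\nu\ge\beta$, so the chord reaches $\alpha+\beta-\nu\le\alpha$, i.e.\ its upper endpoint is at $\alpha$ or beyond it along $\bar A$. I would then produce an explicit SHP avoiding $B$. The idea, mirroring the two-hole construction in Proposition~\ref{Prop:Boundary} and the beam-avoidance constructions in Proposition~\ref{prop:go-far}, is to route a zig-zag path using the two directions $2(\alpha+\beta)$ and $2(\alpha+\beta)+1$ (or $2(\alpha+\beta)+1$ and $2(\alpha+\beta)+2$) through the vertices near the missing slot in such a way that the path passes ``around'' the chord $e_{\alpha+\beta}$ without using it, uses the genuine boundary edges on either side only where $B$ cannot block, and is completed by boundary paths on the complementary arc. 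Concretely: cut the boundary circuit by $e_{\alpha+\beta}$ into two arcs; on the arc containing the bulk of the backbone run a zig-zag covering its internal vertices using direction $2(\alpha+\beta)+1$ together with the appropriate neighbouring direction that is \emph{absent} from $B$ near that end (such a direction exists because $B$'s boundary edges along the backbone are consecutive except for the single hole, and because the chord itself already occupies direction $2(\alpha+\beta)+1$ so at least one neighbour is free), then join to the boundary path on the complementary arc. One checks this SHP uses no edge of $B$: it avoids every backbone edge of $B$ by construction, it avoids $e_{\alpha+\beta}$ since it was cut along it, and it avoids the beams $e_1,\ldots,e_\alpha,e_{m-\delta+1},\ldots,e_m$ because those have directions outside $\{2(\alpha+\beta),2(\alpha+\beta)+1,2(\alpha+\beta)+2\}$ and the boundary portion lies on the ``far'' side where, by the turning-away property of Proposition~\ref{prop:go-far}, no beam of $B$ reaches.

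The main obstacle I expect is the bookkeeping in the endpoint/parity analysis of the zig-zag path: exactly as in the proof of Proposition~\ref{prop:go-far}, depending on whether the chosen free neighbouring direction is $b+1$ or $b-1$ one endpoint of the zig-zag lands on a vertex adjacent to an endpoint of $e_{\alpha+\beta}$ rather than on the endpoint itself, forcing a small correction (appending one extra edge) and a careful re-check that this extra edge is not in $B$ and that the resulting path is still simple and spanning. There is also a genuinely delicate point when $\nu\ge\beta$ pushes the upper endpoint past $\alpha$ into $\bar A$: one must verify that this endpoint does not coincide with, or interfere with, a vertex already used by one of the beams $e_i$ — here I would invoke Proposition~\ref{prop:go-far} (the beams turn away from each other and from the backbone) to guarantee the needed vertices on $\bar A$ are free. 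I would handle the symmetric case $\nu\ge\gamma$ by the obvious reflection. A secondary subtlety is the degenerate situation where the backbone sub-paths are as short as allowed ($\beta=\gamma=2$, $\eta=1$ is forced); there the claim $1\le\nu<\min(\beta,\gamma)=2$ pins $\nu=1$ and one should double-check the construction still makes sense (the relevant zig-zags may be trivial/empty), but no new idea is needed.
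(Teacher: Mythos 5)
Your overall strategy matches the paper's: note that the $B$-edge in direction $2(\alpha+\beta)+1$ is automatically of the form $[\alpha+\beta-\nu,\alpha+\beta+1+\nu]$ with $\nu\geq 1$ (your preliminary case analysis for this is superfluous but harmless), and then rule out $\nu\geq\beta$ (and, symmetrically, $\nu\geq\gamma$) by exhibiting an SHP that misses $B$. The gap is in the construction of that SHP. You propose to cut the boundary circuit along the chord $e=[\alpha+\beta-\nu,\alpha+\beta+1+\nu]$, cover one arc by a zig-zag, and traverse the complementary arc by its boundary path, as in Case~2 of Proposition~\ref{prop: beams}. That works there because the offending edge joins two vertices of $\bar{A}$, so the complementary arc lies inside $\bar{A}$ and carries no boundary edge of $B$. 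Here it fails: in the regime $\beta\leq\nu<\gamma$, the arc cut off by $e$ that avoids the missing edge still contains the $\gamma-\nu\geq 1$ backbone edges $[\alpha+\beta+1+\nu,\alpha+\beta+2+\nu],\ldots,[m-\delta-1,m-\delta]$, all of which belong to $B$, while the other arc contains the entire upper sub-path $\langle\alpha,\ldots,\alpha+\beta\rangle$ ($\beta\geq 2$ edges of $B$). So whichever arc you traverse by a boundary path, you use an edge of $B$, and your assertion that the path ``avoids every backbone edge of $B$ by construction'' is unjustified.

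The paper's fix is structural, not bookkeeping: it does not cut along $e$ at all. It splits $V$ into the interval $\langle\alpha+\beta-\nu,\ldots,\alpha+\beta+\nu\rangle$ (symmetric about the \emph{missing} edge) and its complement, covers the first by a zig-zag in directions $2\alpha+2\beta$ and $2\alpha+2\beta+1$ ending at $\alpha+\beta$, covers the second by a zig-zag in directions $2\alpha+2\beta$ and $2\alpha+2\beta-1$, and joins the two by the single boundary edge $[\alpha+\beta-\nu-1,\alpha+\beta-\nu]$, which lies outside the backbone precisely because $\nu\geq\beta$. Both sides must be covered by zig-zags: of the three directions used, $2\alpha+2\beta$ is even and hence not a direction of $B$, and the unique $B$-edges in directions $2\alpha+2\beta\pm 1$ (the chord $e$ and the boundary edge $[\alpha+\beta-1,\alpha+\beta]$) each lie on the side of the cut where that direction is \emph{not} used. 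Your endpoint/parity concerns are real but secondary to this missing idea; as written, the proposed construction does not produce a path avoiding $B$.
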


\begin{proof}
The blocker $B$ contains an edge in direction $2\alpha+2\beta+1$, and thus, there exists a $\nu$, $1 \leq \nu \leq m-2$, such that $[\alpha+\beta-\nu,\alpha+\beta+1+\nu] \in B$. Assume, on the contrary, that either $\nu \geq \beta$ or $\nu \geq \gamma$. Consider first the case $\nu \geq \beta$. We construct an SHP $H$ that misses $B$.

Divide the vertices of $CK(2m-1)$ into two complementary boundary paths:
\begin{itemize}
\item $A = \langle \alpha+\beta-\nu, \alpha+\beta-\nu+1 \ldots, \alpha+\beta+\nu \rangle$, and

\item $B = \langle \alpha+\beta-\nu-1, \alpha+\beta-\nu-2 \ldots, \alpha+\beta+\nu +1 \rangle$.
\end{itemize}
Cover the vertices of $A$ by a zig-zag path $P_1$ of length $2 \nu$, that starts at $\alpha+\beta-\nu$, uses edges of directions $2\alpha+2\beta,2\alpha+2\beta+1$ alternately, and terminates at $\alpha+\beta$. (For example, in the case $m=15,\alpha=2,\beta=2,\delta=1,\nu=3$, this is the path $\langle 1,7,2,6,3,5,4 \rangle$. See the right part of Figure~\ref{fig:far_edge}.)

Cover the vertices of $B$ by a zig-zag path $P_2$ of length $2m-2\nu-3$, that starts at $\alpha+\beta-\nu-1$, and uses edges in directions $2\alpha+2\beta,2\alpha+2\beta-1$ alternately. (In the aforementioned example (see Figure~\ref{fig:far_edge}), this is the path $\langle 0,8,14,9,13,10,12,11 \rangle$.)

Then, connect $P_1$ and $P_2$ by the boundary edge $[\alpha+\beta-\nu,\alpha+\beta-\nu-1]$, that does not belong to $B$ by the assumption on $\nu$. (In the aforementioned example, this is the edge $[1,0]$.)

It is easy to see that the resulting SHP misses $B$, contradicting the assumption that $B$ is a blocker.

\medskip The case $\nu \geq \gamma$ is handled similarly, as it is equivalent to the case $\nu \geq \beta$ under a reflection through the perpendicular bisector of the edge $[\alpha+\beta,\alpha+\beta+1]$.
\end{proof}

Let us denote the boundary paths $\langle \alpha, \ldots, \alpha+\beta \rangle$ and $\langle m-\gamma-\delta, \ldots, m-\delta \rangle$ by $B^{+}$ and $B^{-}$, respectively. Let us also denote by $e_i$ the edge of $B$ in direction $i$, and put $E^{+}=\{e_i | i=1,3,5,\ldots,2\alpha-1 \}$, $E^{-}=\{e_i | i= 2\alpha+2\beta+3, 2\alpha+2\beta+5,\ldots, 2m-2\delta-1\} = \{e_i: i=0,-2,-4,\ldots,2-2\delta\}$. By Proposition~\ref{prop: beams} we already know that each edge of $E^{+} \cup E^{-}$ meets exactly one vertex of the boundary path $\langle \alpha +1, \ldots, m- \delta - 1 \rangle$, and none of the vertices $\alpha, m-\delta$. Our claim is that each edge of $E^{+} (E^{-})$ meets an internal vertex of $B^{+} (B^{-})$.

The following proposition asserts the two requirements stated in parts (4)-(5) of the definition of Class B, namely, that the edges of $B$ that are parallel to $[0,1], \ldots, [\alpha-1,\alpha]$ lie `above' the missing boundary edge, and that the edges of $B$ that are parallel to $[m-\delta,m-\delta+1], \ldots, [m-1,m]$ lie `below' the missing boundary edge.

\begin{proposition}
\label{cl:test_paths}
Each edge of $E^{+} (E^{-})$ meets an internal vertex of $B^{+} (B^{-})$.
\end{proposition}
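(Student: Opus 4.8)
The plan is to argue by contradiction, in the same spirit as Propositions~\ref{prop: beams} and~\ref{Prop:Aux-Class2}: assuming some edge of $E^{+}$ (say) is a beam that emanates from an internal vertex of $B^{-}$ rather than of $B^{+}$, we construct an explicit simple Hamiltonian path that avoids every edge of $B$. By the reflection symmetry through the perpendicular bisector of the missing edge $[\alpha+\beta,\alpha+\beta+1]$ (which swaps $E^{+}\leftrightarrow E^{-}$, $B^{+}\leftrightarrow B^{-}$, directions $1,3,\ldots,2\alpha-1$ with $0,-2,\ldots,2-2\delta$, etc.), it suffices to treat the case of an edge $e=e_i\in E^{+}$, $i\in\{1,3,\ldots,2\alpha-1\}$, which by Proposition~\ref{prop: beams} meets exactly one vertex of $\langle\alpha+1,\ldots,m-\delta-1\rangle$; the bad case is when that vertex lies in the \emph{interior} of $B^{-}=\langle m-\gamma-\delta,\ldots,m-\delta\rangle$, i.e., in $\{\alpha+\beta+2,\ldots,m-\delta-1\}$, rather than in the interior of $B^{+}=\langle\alpha,\ldots,\alpha+\beta\rangle$.

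First I would set up the geometry: write $e=[p,q]$ with $p$ the endpoint in $B^{-}$'s interior (so $\alpha+\beta+1 < p \le m-\delta-1$) and $q$ the other endpoint, which lies on the complementary arc $\bar A=\langle m-\delta,\ldots,2m-2,0,\ldots,\alpha\rangle$ by Proposition~\ref{prop: beams}, and moreover $q$ is `close' to $\alpha$ in the sense dictated by the direction $i\le 2\alpha-1$. The key structural fact I want to exploit is that $e$, lying in $E^{+}$, has a small positive odd direction, so it cuts off a relatively short arc on the $\bar A$ side while its $B^{-}$-endpoint $p$ is deep inside the backbone; this leaves the missing-edge location $[\alpha+\beta,\alpha+\beta+1]$ strictly on one side of $e$, together with the part $\langle\alpha+\beta+1,\ldots,p\rangle$ of $B^{-}$. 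The SHP I would build is, as in the earlier proofs, a concatenation of a zig-zag path on the arc cut off by $e$ that uses $e$'s direction together with an adjacent direction $x\notin\{$directions of $B\}$ (such $x$ exists because among the two neighbours of $i$ in $\mathbb{Z}_{2m-1}$, at least one is even and hence absent from $B$, unless $i=1$, in which case $0$ is present but $2$ is absent — so one always has a usable $x$), a second zig-zag piece using the direction $2\alpha+2\beta+1$ of the hole-replacing edge $[\alpha+\beta-\eta,\alpha+\beta+1+\eta]$ together with an adjacent even (hence absent) direction to cover the stretch around the missing edge without touching that edge or its parallel beam, and boundary paths along the remaining arcs to stitch everything into a single Hamiltonian path. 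The route is chosen so that on each arc we use only a pair of directions one of which is not realized by $B$, so no edge of $B$ is ever used; the boundary-path pieces use only boundary edges lying in the interior of $\bar A$ or outside the backbone, which are not in $B$.

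The main obstacle I expect is the bookkeeping of the several boundary sub-cases (whether $i=1$ or $i\ge 3$; whether $q=\alpha$ or $q$ is interior to $\bar A$; whether $p=m-\delta-1$ or is strictly interior; the parity/adjacency of the directions chosen for each zig-zag, since a zig-zag of a given length may terminate at either of two vertices, forcing the sign of the second direction — exactly the $i+1$-vs-$j-1$ dichotomy handled in Proposition~\ref{prop:go-far}) and verifying in each case that the three or four pieces glue into a \emph{simple} spanning path, i.e., that the zig-zag paths and boundary paths partition the vertex set and are pairwise non-crossing. I would organize this by first proving the cleanest generic case in detail and then indicating the (minor) modifications for the boundary cases, exactly as the paper does in Propositions~\ref{prop: beams}, \ref{prop:go-far} and~\ref{Prop:Aux-Class2}. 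A secondary subtlety: I must make sure the constructed path does not accidentally use the beam $[\alpha+\beta-\eta,\alpha+\beta+1+\eta]$ itself (direction $2\alpha+2\beta+1$, which \emph{is} in $B$) — this is why the zig-zag covering the hole region must be paired with an adjacent \emph{even} direction and must terminate so as to step over that diagonal, a point I would check carefully. Once the avoiding SHP is exhibited, we contradict the assumption that $B$ is a blocker, so every edge of $E^{+}$ meets an internal vertex of $B^{+}$, and by symmetry every edge of $E^{-}$ meets an internal vertex of $B^{-}$, as claimed.
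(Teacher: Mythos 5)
Your plan diverges fundamentally from the paper's proof, and as it stands it has a genuine gap. The paper does \emph{not} derive a contradiction from one misplaced beam by exhibiting a single avoiding SHP. Instead it builds a family of $2(\alpha+\delta)$ ``test paths'' $F_i$ ($1-2\delta\le i\le 2\alpha$), each a double zig-zag anchored at the missing edge $[\alpha+\beta,\alpha+\beta+1]$ and confined to the two polygons $P_i^{\pm}$ cut off by the two direction-$i$ diagonals through $\alpha+\beta$ and $\alpha+\beta+1$; it shows that each $F_i$ can only be blocked by edges of $E^{+}\cup E^{-}$, that each such edge can block at most two $F_i$'s, and then uses a pigeonhole count plus an induction (peeling off the extremal beams $e_{2\alpha-1}$ and $e_{-2\delta}$) to pin down exactly which $F_i$'s each beam must meet, which forces each beam onto the correct sub-path $B^{+}$ or $B^{-}$. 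The reason for this global counting argument is precisely the difficulty your sketch does not confront: when you assume only that \emph{some} $e_{2\mu-1}\in E^{+}$ emanates from the interior of $B^{-}$, you know nothing about the positions of the other $\alpha+\delta-1$ beams (several of them may be misplaced as well), and your candidate SHP must provably avoid all of them. Your proposed path is anchored locally at the one misplaced beam and at the hole-replacing edge, and nothing in the sketch explains how it dodges the remaining beams, whose directions ($1,3,\dots,2\alpha-1,0,-2,\dots,2-2\delta$) your zig-zags near the top of $B^{+}$ and the bottom of $B^{-}$ would generically use.

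Two of the concrete pieces you describe also fail as stated. First, the ``second zig-zag using the direction $2\alpha+2\beta+1$ together with an adjacent even direction'' cannot ``cover the stretch around the missing edge'': expanding outward from $[\alpha+\beta,\alpha+\beta+1]$ in directions $2\alpha+2\beta+1$ and $2\alpha+2\beta$ (or $2\alpha+2\beta+2$), after covering only the $2\eta$ vertices strictly between $\alpha+\beta-\eta$ and $\alpha+\beta+1+\eta$ the next direction-$(2\alpha+2\beta+1)$ step is forced to be the hole-replacing edge $[\alpha+\beta-\eta,\alpha+\beta+1+\eta]$ itself, which lies in $B$; so this piece stalls long before it reaches the rest of $B^{+}$ and $B^{-}$. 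Second, the ``boundary paths along the remaining arcs'' cannot be used to pick up the leftover backbone vertices, since every boundary edge of $B^{+}$ and $B^{-}$ belongs to $B$; only the arc $\bar A$ is safe for boundary edges. A repairable version of a single-path argument does exist --- e.g., take $\mu$ maximal with $e_{2\mu-1}$ misplaced and check that the paper's test path $F_{2\mu}$ avoids $B$ --- but that path is a global double zig-zag through $\alpha+\beta$ and $\alpha+\beta+1$ in directions $2\mu-1,2\mu,2\mu+1$, not the construction you outline, and verifying that it misses $B$ already requires essentially all of the paper's $P_i^{\pm}$ and direction-bookkeeping machinery (Claims analogous to the paper's Claims on $B^{\pm}\subset P_i^{\pm}$ and on which directions $F_i$ can share with $B$). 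As written, the proposal names the strategy but omits the construction that would make it work.
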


\begin{proof}
The proof of the proposition, which spans the rest of this section, consists of two steps:
\begin{itemize}
\item \textbf{Step~1:} We construct a set of $2(\alpha+\delta)$ SHPs (which we call ``test paths'') $F_i$, $1-2\delta \leq i \leq 2\alpha$, with the following properties:
\begin{enumerate}
\item Any edge common to $B$ and to some test path $F_i$ belongs to $E^{+}\cup E^{-}$.
\item Each edge of $E^{+}\cup E^{-}$ meets at most two $F_i$'s.
\end{enumerate}
Since $|E^{+}\cup E^{-}|=\alpha+\delta$, it follows that $B$ cannot block all the test paths $F_i$ unless each edge $e \in E^{+}\cup E^{-}$ meets exactly two $F_i$'s.

\item \textbf{Step~2:} We show, by an inductive argument, that the edges of $E^{+}\cup E^{-}$ meet the test paths $F_i$ in a particular order. As a consequence, we find that each edge of $E^{+}(E^{-})$ meets an internal vertex of $B^{+} (B^{-})$.
\end{itemize}

\medskip \noindent \textbf{Preliminaries}

\medskip Before starting the description of the test paths $F_i$, we need a few additional notations. Recall that the set of edges in direction $j$ is $D_j= \{ [x,y]| x,y \in \mathbb{Z}_{2m-1}, x \neq y, x+y=j \}$. This is a set of $m-1$ parallel edges, an almost perfect matching of $G$ that misses exactly one vertex (the vertex $x$ that satisfies $2x=j$).

Since directions are elements of $\mathbb{Z}_{2m-1}$, we may replace the indices that represent them by any sequence of $2m-1$ consecutive numbers. In our context, we find it convenient to use as indices the numbers
\[
2 \alpha +2 \beta -2m+3, \ldots, 2 \alpha +2 \beta , 2 \alpha +2 \beta +1.
\]
In the sequel we will be interested in edges that pass through either $\alpha+\beta$ or $\alpha+\beta+1$. We note that for $2 \alpha +2 \beta -2m+4 \leq i \leq 2 \alpha +2 \beta -1$ there are two different parallel edges in direction $i$, $d_i^{+}$ through $\alpha+\beta$ and $d_i^{-}$ through $\alpha+\beta+1$ (see Figure~\ref{fig:directions}). The exceptional values are:
\begin{itemize}
\item $2(\alpha +\beta)$. (No edge in this direction through $\alpha +\beta$.)
\item $2(\alpha +\beta +1)=2\alpha +2 \beta -2m+3$. (No edge in this direction through $\alpha +\beta +1$.)
\item $2(\alpha +\beta) +1$. (The direction of $[\alpha +\beta,\alpha +\beta+1]$.)
\end{itemize}

\begin{figure}[tb]
\begin{center}
\scalebox{1.0}{
\includegraphics{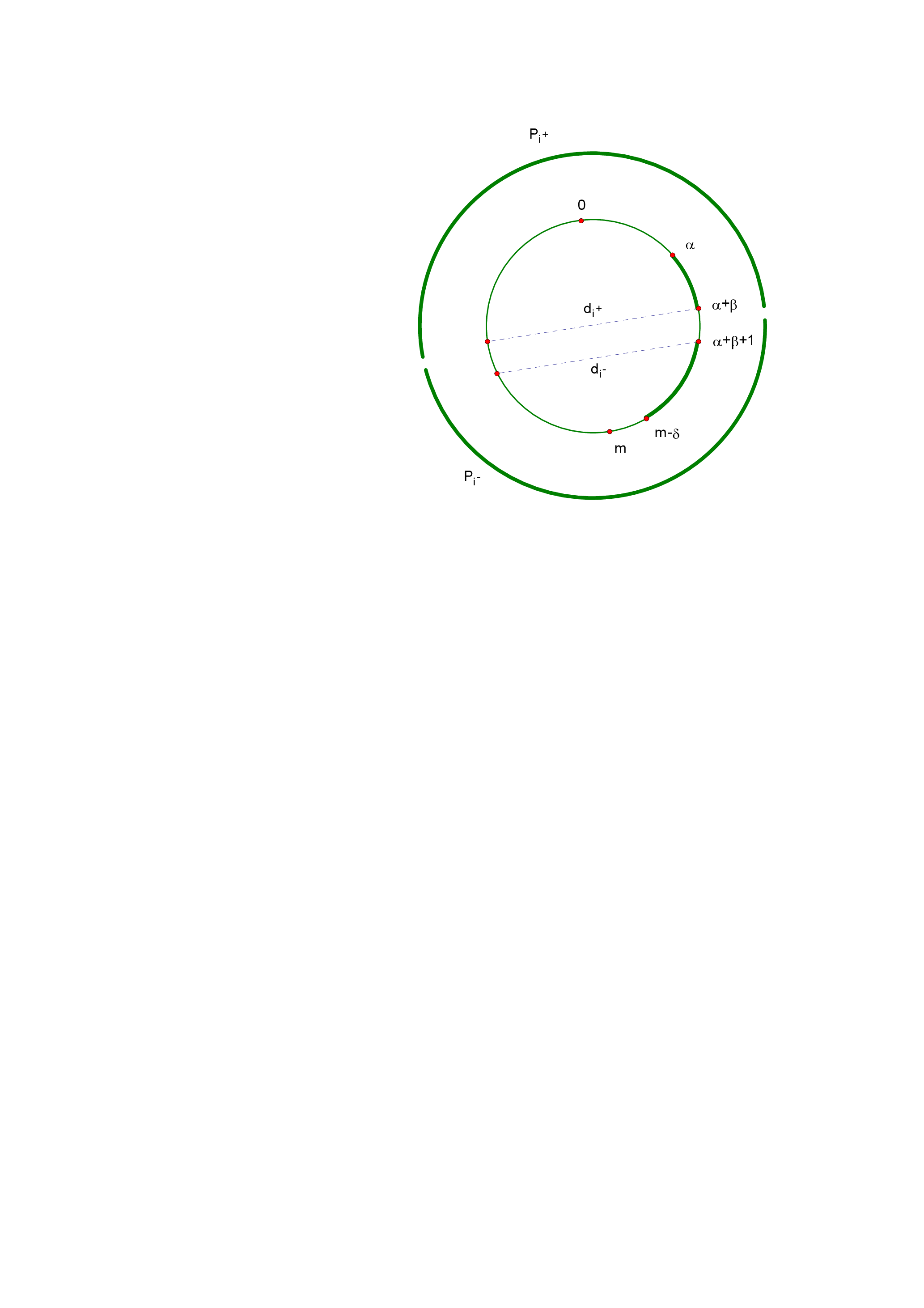}
} \caption{Illustration for the proof of Proposition~\ref{cl:test_paths}.}
\label{fig:directions}
\end{center}
\end{figure}

For each $i$, $2\alpha+2\beta-2m+4 \leq i \leq 2\alpha+2\beta-1$, let $P_i^{+}$ be the intersection of $P$ with the closed half-plane bounded by $\mathrm{aff}(d_i^{+})$ that misses $d_i^{-}$, and let $P_i^{-}$ be the intersection of $P$ with the closed half-plane bounded by $\mathrm{aff}(d_i^{-})$ that misses $d_i^{+}$, as shown in Figure~\ref{fig:directions}.

Note that as $i$ increases from $2\alpha +2 \beta -2m+4(=2-2\gamma-2\delta)$ to $2\alpha+2\beta-1$, the sequence of polygons $P_i^{-}$ increases $(P_i^{-} \subset P_{i+1}^{-})$, whereas the sequence $P_i^{+}$ decreases $(P_{i+1}^{+} \subset P_i^{+})$. We call this property ``the monotonicity of $P_i$''.

\medskip \noindent \textbf{The definition of the $F_i$'s}

\medskip We are ready to define the ``test paths'' $F_i$. The set $D_i$ splits into two subsets: $D_i^{+}=\{ d \in D_i | d \subset P_i^{+} \}$ and $D_i^{-}=\{ d \in D_i | d \subset P_i^{-} \}$. $D_i^{+}$ is a simple perfect matching in $P_i^{+}$, and $D_i^{-}$ is an almost simple perfect matching in $P_i^{-}$ (or vice versa).
By adding to $D_i^{+}$ all edges in direction $i-1$ that lie in $P_i^{+}$, we obtain a zig-zag path, which is an SHP of $P_i^{+}$, that ends with an edge in direction $i$ at $\alpha + \beta$. We call this path $F_i^+$ (see Figure~\ref{fig:F_i} at the end of the paper).
Similarly, by adding to $D_i^{-}$ all edges in direction $i+1$ that lie in $P_i^{-}$ we obtain a zig-zag path, an SHP $F_i^{-}$ of $P_i^{-}$, that starts at $\alpha+\beta+1$ with an edge in direction $i$. Connecting $F_i^{+}$ to $F_i^{-}$ by the edge $[\alpha+\beta,\alpha+\beta+1]$ we obtain the path $F_i$. That is, $F_i= F_i^+ \cup F_i^- \cup \{[\alpha+\beta,\alpha+\beta+1]\}$. An example of the paths $F_i$, $-2\delta+1 \leq i \leq 2\alpha$, is presented in Figure~\ref{fig:F_i} at the end of the paper.

\medskip \noindent \textbf{Properties of the $F_i$'s}

\medskip Now we show that the $F_i$'s satisfy the aforementioned Properties (1)-(2), thus accomplishing Step~1 of the proof. For this, we need a few additional notations.

Within the range $2\alpha+2\beta-2m+4 \leq i \leq 2\alpha+2\beta-1$, we distinguish two special directions $\iota^-$ and $\iota^+$, by requiring that $d^-_{\iota^-}$ connects the two endpoints of $B^-$ and $d^+_{\iota^+}$ connects the two endpoints of $B^+$. Thus
\begin{equation}\label{Eq:Iota}
\iota^- = \alpha+\beta+1+m-\delta = m-\gamma-\delta+m-\delta, \qquad \mbox{and} \qquad \iota^+= \alpha+\alpha+\beta=2\alpha+\beta.
\end{equation}
In order to place the direction $\iota^-$ at the range $[2\alpha+2\beta-2m+3,2\alpha+2\beta+1]$, we subtract $2m-1$, and obtain $\iota^- = 2m-\gamma-2\delta-2m+1= 1-\gamma-2\delta$. And indeed, $2\alpha-2\beta-2m+4 < 1-\gamma - 2\delta \leq 2\alpha+\beta \leq 2\alpha+2\beta-1$.

For our proof, we need the paths $F_i$ only for $1-2\delta \leq i \leq 2\alpha $. (Note that $2\alpha +2 \beta -2m+4 < 1-2\delta \leq 2\alpha +2 \beta-1$,  since $\alpha+\beta+\gamma+\delta = m-1$.) The following claim asserts that in this range of $i$'s, we always have $B^+ \subset P_i^+$ and $B^- \subset P_i^-$.
\begin{claim}
\label{prop:B_in_P_i}
For all $1-2\delta \leq i \leq 2\alpha$, $B^{-} \subset P_i^{-}$ and $B^{+} \subset P_i^{+}$.
\end{claim}
\begin{proof}[Proof of Claim~\ref{prop:B_in_P_i}]
By the monotonicity of the $P_i$'s, it is sufficient to prove that the range between $2\alpha$ and $1-2\delta$ is part of the range between $\iota^-$ and $\iota^+$. This indeed holds, as by~\eqref{Eq:Iota} we have $\iota^- =1-\gamma-2\delta<1-2\delta$ and $\iota^+ = 2\alpha+\beta>2\alpha$.
\end{proof}
%

The following claim shows that the $F_i$'s satisfy Property~(1) stated at the beginning of the proof of Proposition~\ref{cl:test_paths}.
\begin{claim}
\label{prop:test_paths_meet_B}
Our test paths $F_i$, $-2\delta+1 \leq i \leq 2\alpha$ do not meet any edge of $B^{+} \cup \{e_{2\alpha+2\beta+1}\} \cup B^{-}$. I.e., they meet $B$ in $E^{+} \cup E^{-}$ only.
\end{claim}

\begin{proof}[Proof of Claim~\ref{prop:test_paths_meet_B}]
First note that the boundary edge $[\alpha + \beta, \alpha + \beta+1]$ belongs to all $F_i$, but is not in $B$. Thus no $F_i$ uses $e_{2\alpha + 2\beta+1}$. Beyond that, each $F_i$ uses only edges in directions $i-1$,$i$ and $i+1$. Thus the directions of the edges in $\bigcup \{ F_i | 1-2\delta \leq i \leq 2 \alpha\}$ (excluding $[\alpha + \beta, \alpha + \beta+1]$) are $$-2\delta, \ldots ,0, \ldots,  2\alpha+1,$$ as demonstrated in Figure~\ref{fig:F_i}. The directions of the edges of $B_{+}=\langle \alpha, \ldots \alpha+\beta \rangle$ are the odd positive numbers $$2\alpha+1, \ldots, 2\alpha+2\beta-1.$$ The directions of the edges of $B_{-}=\langle m-\gamma-\delta, \ldots m-\delta \rangle$ are $2m-2\gamma-2\delta+1, \ldots, 2m-2\delta-1$ $(\bmod (2m-1))$. In order to embed them in the interval $[2\alpha+2\beta -2m+3,2\alpha+2\beta+1 ]$ we subtract $2m-1$ to get the even negative numbers $$-2\gamma-2\delta+2, \ldots, -2\delta.$$

Thus the only directions common to edges of $F_i$ $(1-2\delta \leq i \leq 2\alpha)$ and of $B^{-} \cup B^{+}$ are $-2\delta$ (the direction of $[m-\delta-1, m-\delta]$ in $B^{-}$, and of some edges in $F_{1-2\delta}$) and $2\alpha+1$ (the direction of $[\alpha,\alpha+1]$ in $B^{+}$, and of some edges in $F_{2\alpha}$).

But the edges of $F_{1-2\delta}$ in direction $-2\delta$ lie in $P_{1-2\delta}^{+}$, whereas $B^{-}$ is part of $P_{1-2\delta}^{-}$. Similarly, the edges of $F_{2\alpha}$ in direction $2\alpha+1$ lie in $P_{2\alpha}^{-}$, whereas $B^{+}$ is part of $P_{2\alpha}^{+}$. This completes the proof of the claim.
\end{proof} 

The following claim shows that the $F_i$'s satisfy Property~(2) stated at the beginning of the proof of Proposition~\ref{cl:test_paths}.
\begin{claim}
\label{prop:each_edge_meets_at_most_2}
Each edge of $E^{+}\cup E^{-}$ meets at most two $F_i$'s.
\end{claim}

\begin{proof}[Proof of Claim~\ref{prop:each_edge_meets_at_most_2}]
By Claim~\ref{prop:test_paths_meet_B}, the only edges of $B$ that may meet our test-paths $F_i$ $(1-2\delta \leq i \leq 2\alpha)$ are: $e_1,e_3,\ldots, e_{2\alpha-1}$ (in $E^{+}$) and $e_0,e_{-2},e_{-4}, \ldots, e_{2-2\delta}$ (in $E^{-}$). (Recall that $e_{2\mu-1}$ denotes the edge of $E^{+} \subset B$ parallel to $[\mu-1,\mu]$ ($\mu=1,\ldots,\alpha$), and $e_{-2\nu}$ is the edge of $E^{-} $ parallel to $[m-\nu-1,m-\nu]$ ($\nu=0,\ldots,\delta-1$) ).

Note that the only test paths $F_i$ that include edges in direction $2\mu-1$ $(1 \leq \mu \leq \alpha)$ are $F_{2\mu-2},F_{2\mu-1}$ and $F_{2\mu}$. Moreover, the edges of $F_{2\mu-2}$ in direction $2\mu-1$ lie in $P_{2\mu-2}^{-}$, whereas the edges of $F_{2\mu}$ in direction $2\mu-1$ lie in $P_{2\mu}^{+}$, and $P_{2\mu-2}^{-} \cap P_{2\mu}^{+} \subset P_{2\mu}^{-} \cap P_{2\mu}^{+} = \emptyset$. This means that $e_{2\mu-1}$ can meet at most two test-paths $F_i$. A similar argument shows that each edge $e_{-2\nu} \in E^{-}$ $(\nu=0,1,\ldots,\delta-1)$ can meet at most two  test-paths $F_i$.
\end{proof}

\medskip \noindent \textbf{Step~2 of the proof}

\medskip

So far we accomplished Step~1 of the proof. We have constructed $2(\alpha+\delta)$ test-paths $F_i$ $(-2\delta+1 \leq i \leq 2\alpha)$, to be met by $\alpha+\delta$ edges of $E^{-} \cup E^{+}$, where each edge meets at most two $F_i$'s. Thus each edge of $E^{-} \cup E^{+}$ must meet exactly two test-paths. \begin{claim}\label{Claim:only_possible}
The only possible way in which the above can hold is if for all $\mu=1,\ldots,\alpha$, the edge $e_{2\mu-1}$ meets $F_{2\mu}$ and $F_{2\mu-1}$, and for all  $\nu=0,\ldots,\delta$, the edge $e_{-2\nu}$ meets $F_{-2\nu}$ and $F_{1-2\nu}$.
\end{claim}

\begin{proof}[Proof of Claim~\ref{Claim:only_possible}]
By induction. The base is the cases $(\alpha,\delta)=(1,0)$ and $(\alpha,\delta)=(0,1)$. In the case $(\alpha,\delta)=(1,0)$, there are only two paths: $F_1$ and $F_2$. The directions in $F_2$ are 3 (which is not a direction of any edge of $E^+$), 2 (which is not a direction of any edge in the blocker) and 1. Hence, $e_1$ must meet $F_2$. Therefore, it emanates from $B^+$ and must meet $F_1$ as well. The case $(\alpha,\delta)=(0,1)$ is proved similarly. 
The induction steps are $(\alpha-1,\delta) \rightarrow (\alpha,\delta) $ if $\alpha>0 $, and $(\alpha,\delta-1) \rightarrow (\alpha,\delta) $ if $\delta>0 $). If $\alpha>0 $ then $F_{2\alpha}$ must include $e_{2\alpha-1}$ (as $e_{2\alpha+1}$ is out of the range and the direction $2\alpha$ does not appear in the blocker). Hence, $e_{2\alpha-1}$ emanates from $B^+$. It follows that $F_{2\alpha-2 }$ does not include $e_{2\alpha-1}$ (since the edges in direction $2\alpha-1$ in $F_{2\alpha-2}$ emanate from $B^-$), and therefore $e_{2\alpha-1}$ meets just $F_{2\alpha}$ and $F_{2\alpha-1}$. Put the edge $e_{2\alpha-1}$ and the paths $F_{2\alpha}, F_{2\alpha-1}$ aside, and you are left with the case $(\alpha-1,\delta)$.

A similar argument shows that if $\delta>0$, then $e_{-2\delta}$ must meet $F_{-2\delta}$ and  $F_{2-2\delta}$.
\end{proof}

Now comes our final conclusion: For $\mu=1,\ldots,\alpha$, $e_{2\mu-1}$ is an edge of $F_{2\mu}$ in direction $2\mu-1$ and therefore lies in $P_{2\mu}^{+}$ and does not meet $B^{-} \subset P_{2\mu}^{-}$. We already know that $e_{2\mu-1}$ meets the boundary path $\langle \alpha, \ldots, m-\delta \rangle$ in a (unique) internal vertex, thus we find that $e_{2\mu-1}$ meets $B^{+}=\langle \alpha, \ldots, \alpha+\beta \rangle$, but not in $\alpha$. It does not contain $\alpha+\beta$, since the only edges of $F_{2\mu}$ at $\alpha+\beta$ are $[2\mu-\alpha-\beta+2m-1,\alpha+\beta]$ and $[\alpha+\beta,\alpha+\beta+1]$. In other words, $e_{2\mu-1}$ meets an internal vertex of $B^{+}=\langle \alpha, \ldots, \alpha+\beta \rangle$.
In a symmetric way, we can show that $e_{-2\mu}$ $( 0 \leq \mu \leq \delta-1)$ passes through an internal vertex of $B^{-}=\langle \alpha+\beta+1, \ldots, m-\delta \rangle$. This completes the proof of Proposition~\ref{cl:test_paths}.
\end{proof}

Proposition~\ref{cl:test_paths} completes the restrictions on the beams of a blocker with a broken backbone, and together with the restrictions that were described in the previous two sections, this completes one direction of the proof -- each blocker satisfies the conditions of Class A or Class B that we defined in the introduction.

\section{Proof of the Converse Direction}
\label{sec:converse}

In this section we prove the converse direction of the main theorem: any subset of edges of $G$ of one of the forms of Class A and Class B, as described in the introduction, has an edge in common with any SHP. First we prove this for the elements of Class A, and then we expand the proof to elements of Class B.

\subsection{The elements of Class A are blockers}

We start with a description of the elements of Class A, that is somewhat different from the description given in the introduction, and is more convenient for the arguments that are needed in this section.

\textbf{An alternative description of Class A:}
We regard the boundary circuit of $CK(2m-1)$ as composed of two vertex disjoint paths (see Figure~\ref{fig:opp_dir_full_spine}): The (short) $a$-path $\langle a_0, \ldots, a_{m-\alpha-\delta} \rangle$, and the (long) $b$-path $\langle b_0, \ldots, b_{m-3+\alpha+\delta} \rangle$, with $b_0$ adjacent to $a_0$ and $b_{m-3+\alpha+\delta}$ adjacent to $a_{m-\alpha-\delta}$. Here, as above, $\alpha \geq 0$, $\delta \geq 0$, and $\alpha+\delta \leq m-2$. (The $a$-path is longer than the $b$-path only when $\alpha+\delta \leq 1$.) The set $B$ is the union of the boundary $a$-path ($m-\alpha-\delta$ edges) and $\alpha+\delta$ diagonals $[a_{i_{\nu}},b_{j_{\nu}}]$, $\nu=1,\ldots,\alpha+\delta$, where $$1 \leq i_1 \leq i_2 \leq \ldots \leq i_{\alpha+\delta} \leq m-\alpha-\delta-1,$$ and
\begin{equation}\label{Eq:Kochavit}
j_{\nu} = \left\lbrace
  \begin{array}{c l}
    i_{\nu}+2\nu-2, & 1 \leq \nu \leq \alpha\\
    i_{\nu}+2\nu-3, & \alpha+1 \leq \nu \leq \alpha+\delta.
  \end{array}
\right.
\end{equation}

\begin{figure}[tb]
\begin{center}
\scalebox{1.2}{
\includegraphics{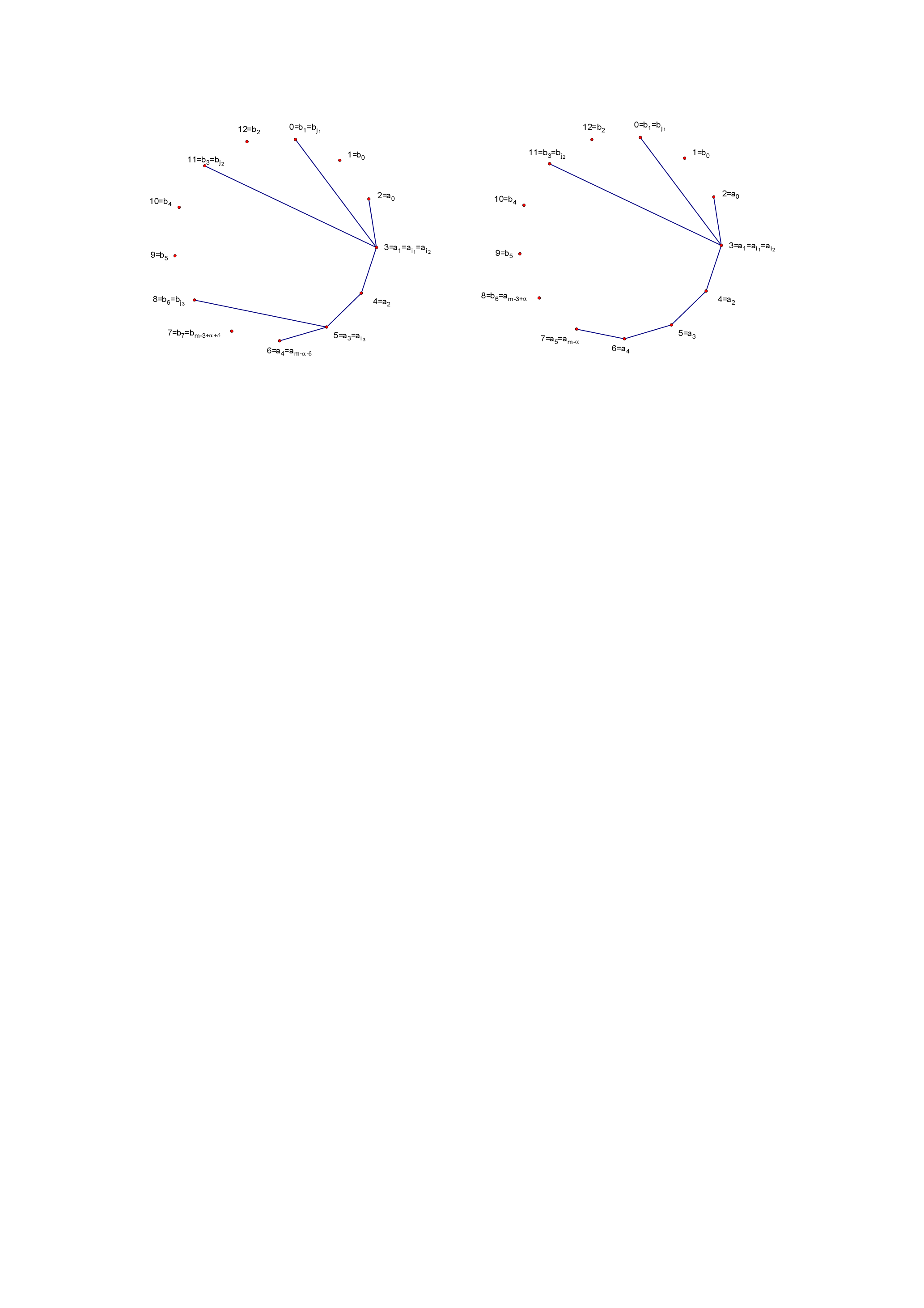}
} \caption{Illustration for the alternative description of Class A's elements. In the left figure, $m=7, \alpha=2$ and $\delta=1$, and in the right figure,
$m=7,\alpha=2$ and $\delta=0$.}
\label{fig:opp_dir_full_spine}
\end{center}
\end{figure}


All these diagonals connect an internal vertex of the $a$-path with a vertex of the $b$-path. As we increase $\nu$ by one, the increment in $j_{\nu}$ is usually equal to the increment in $i_{\nu}$ plus two. Only in passing from $\nu=\alpha$ to $\nu=\alpha+1$ the increment in $j_{\nu}$ equals the increment in $i_{\nu}$ plus one. (Note that $j_1=i_1$ (provided $\alpha>0$), and $m-3+\alpha+\delta-j_{\alpha+\delta} = m-\alpha-\delta-i_{\alpha+\delta}$ (provided $\delta>0$).) This description is valid also in the extreme cases, when $\alpha=0$ or $\delta=0$. The only cases where the $b$-vertex of a diagonal is not an internal vertex of the $b$-path are $(\delta=0,i_{\alpha}=m-\alpha-1,j_{\alpha}=m-3+\alpha)$ and $(\alpha=0,i_1=1,j_1=0)$.

\begin{theorem}
Any subset of edges $B \subset E(G)$ of the type described in Class A is a blocker for SHP's in $CK(2m-1)$.
\label{thm:opp_direc_A}
\end{theorem}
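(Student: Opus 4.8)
The plan is to show that every SHP $H$ in $CK(2m-1)$ must use at least one edge of $B$, by a counting argument based on which vertices of the $a$-path an SHP can avoid touching via the diagonals of $B$. First I would set up the dichotomy: if $H$ uses any edge of the boundary $a$-path $\langle a_0,\ldots,a_{m-\alpha-\delta}\rangle$, we are immediately done since those edges are all in $B$. So assume $H$ uses no edge of the $a$-path. Since $H$ is a spanning simple path, each internal vertex $a_i$ ($1\le i\le m-\alpha-\delta-1$) has degree $2$ in $H$ using only diagonals, and each such diagonal goes to some $b_j$; moreover simplicity forces these diagonals to be nested/non-crossing in a predictable way (this is the analogue of the structure used in Proposition~2.2's argument, and of the even case in \cite{KP16}). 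I would then extract from $H$ the subfamily of diagonals incident to the $a$-path and argue that, because $H$ has $2(m-1)$ edges and only $m-1$ ``short side'' vertices $b$ available on the minor side, $H$ is forced to use diagonals that hit the $a$-path internal vertices in a very constrained pattern — essentially a single ``fan'' from a consecutive block of $a$-vertices.

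The key combinatorial step is to compare the directions. The diagonals of $B$ incident to the $a$-path, namely $[a_{i_\nu},b_{j_\nu}]$, realize exactly the $\alpha+\delta$ directions not used by the $a$-path among the $m$ directions of the directional support, and the defining relation~\eqref{Eq:Kochavit} encodes precisely that as $\nu$ runs $1,\ldots,\alpha+\delta$, the ``root'' $a_{i_\nu}$ advances weakly while the endpoint $b_{j_\nu}$ advances strictly faster (by $2$, except once by $1$). I would show that any SHP $H$ avoiding the $a$-path edges must, for each direction $d$ among those $\alpha+\delta$ missing directions, contain an edge whose $a$-side endpoint is ``sandwiched'' between $a_{i_\nu}$ and the next root — i.e., the diagonal of $B$ in direction $d$ is the unique obstacle that $H$ cannot dodge. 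Concretely: fix the edge $e$ of $H$ incident to $a_1$ (or the first internal $a$-vertex $H$ reaches from the $a_0$ end); chase its direction, and use the monotonicity of the nesting together with the fact that the $b$-path has only $m-2$ internal vertices to show that $H$'s diagonals at the $a$-path must ``run out of room'' exactly at one of the $[a_{i_\nu},b_{j_\nu}]$.

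A cleaner way to organize step two, which I'd prefer, is by induction on $\alpha+\delta$ together with a peeling argument mirroring the proof of Claim~\ref{Claim:only_possible}: look at the extreme diagonal $[a_{i_1},b_{j_1}]$ (with $j_1=i_1$ when $\alpha>0$, so it is the ``lowest'' diagonal just above the $a$-path); if $H$ avoids it and avoids the $a$-path, then $H$ restricted to one side of that diagonal is an SHP of a smaller convex polygon that must avoid the induced Class~A blocker there, while the other side is forced to be a boundary path — contradiction by the induction hypothesis, with base case $\alpha+\delta\in\{0,1\}$ handled directly (e.g. $\alpha=\delta=0$ is exactly Proposition~2.2). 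The main obstacle I anticipate is the bookkeeping in this peeling step: one must verify that cutting along $[a_{i_1},b_{j_1}]$ really does leave a sub-configuration of the same Class~A type (the shifted parameters must satisfy the inequalities $1\le i'_1\le\cdots\le i'_{\alpha+\delta-1}\le\cdots$ and the analogue of~\eqref{Eq:Kochavit}), and that the ``other side'' is genuinely forced to be traversed as a boundary path by $H$ — this uses the turn-away property (Proposition~\ref{prop:go-far}) and the parity/direction constraints, and is where the argument becomes delicate rather than routine.
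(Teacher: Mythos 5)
Your opening move is exactly the paper's: reduce to the case where the SHP $H$ uses no edge of the $a$-path (Observation~\ref{obs:connect_a_path}), so that every internal $a$-vertex must be joined by two diagonals to the $b$-path. But the heart of the argument is missing. The paper's engine is the following: since both boundary edges at an internal $a_i$ lie in $B$, $a_i$ cannot be an endpoint of $H$, and non-crossingness forces its two $H$-neighbours to be two \emph{adjacent} $b$-vertices $b_{k(i)},b_{k(i)+1}$, with $i\mapsto k(i)$ strictly increasing. One then proves by induction on $\nu$ that $k(i_\nu)>j_\nu$ for every $\nu$ --- each diagonal $[a_{i_\nu},b_{j_\nu}]\in B$ forbids $j_\nu\in\{k(i_\nu),k(i_\nu)+1\}$ and, combined with~\eqref{Eq:Kochavit} and the monotonicity of $k$, forces a skip --- and finally counts the tail of $H$ beyond $a_{i_{\alpha+\delta}}$: it must cover $t$ further $a$-vertices using at most $t-2$ (resp.\ $t'-3$) further $b$-vertices, with no $a$--$a$ edges available. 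Your version of this step is not the right statement: you claim $H$ must ``run out of room exactly at one of the $[a_{i_\nu},b_{j_\nu}]$,'' but $H$ can dodge every one of those diagonals individually; the contradiction is a global count at the end of the path, and it only closes because each internal $a_i$ consumes a \emph{consecutive pair} of $b$-vertices --- a fact you gesture at (``nested/non-crossing in a predictable way'') but never pin down.

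Your preferred route, peeling along $[a_{i_1},b_{j_1}]$ and inducting on $\alpha+\delta$, has a concrete flaw as described: edges of $H$ are free to cross that diagonal (only mutual crossings within $H$ are forbidden), so $H$ restricted to one side of $[a_{i_1},b_{j_1}]$ need not be an SHP of the smaller polygon, and Proposition~\ref{prop:go-far} constrains the beams of a \emph{blocker}, not the edges of $H$, so it cannot be invoked to force $H$ to traverse the other side as a boundary path. The delicacy you flag there is not mere bookkeeping; it is the place where that approach, as stated, breaks. I recommend abandoning the peeling and completing route (a) along the lines above.
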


In the proof of the theorem we use the following observation:
\begin{observation}
\label{obs:connect_a_path}
Let $B$ be a subset of $E(CK(2m-1))$ that conforms to the description of Class A, and let $F$ be an SHP in $CK(2m-1)$. If an edge of $F$ connects two vertices of the $a$-path, then $F \cap B \neq \emptyset$.
\end{observation}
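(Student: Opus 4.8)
The plan is to exploit the fact that, in the alternative description of Class~A, the $a$-path is a \emph{boundary} sub-arc of $CK(2m-1)$ and \emph{all} of its edges belong to $B$ (it is one of the three parts making up every Class~A set). Thus it suffices to show that any SHP $F$ that ``shortcuts'' part of the $a$-path by a chord is forced to also traverse at least one boundary edge of that part. Concretely: after the cyclic relabelling already fixed in the paper, the $a$-path is the arc $\langle \alpha,\alpha+1,\ldots,m-\delta\rangle\subseteq\langle 0,1,\ldots,m\rangle$, and $B$ contains the boundary edges $[\alpha,\alpha+1],\ldots,[m-\delta-1,m-\delta]$. Suppose $F$ contains an edge $[a_p,a_q]=[\alpha+p,\alpha+q]$ joining two vertices of the $a$-path, with $0\le p<q$ and $\alpha+q\le m-\delta\le m$. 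If $q=p+1$ then $[a_p,a_q]$ is itself a boundary edge of the $a$-path, hence lies in $B$, and we are done; so we may assume $q\ge p+2$.

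For the remaining case I would invoke the elementary fact established inside the proof of the second Proposition of Section~\ref{sec:boundary}: any spanning simple (non-crossing) subgraph $C$ of $CK(2m-1)$ without isolated vertices that contains an edge $[a,c]$ with $0\le a<c\le m$ must also contain a boundary edge $[b,b+1]$ with $a\le b<b+1\le c$. An SHP is exactly such a subgraph (it is non-crossing, spans all vertices, and every vertex has degree $1$ or $2$), so applying this fact with $[a,c]=[\alpha+p,\alpha+q]$ produces a boundary edge $[r,r+1]\in E(F)$ with $\alpha+p\le r<r+1\le\alpha+q$. That edge lies on the $a$-path, hence in $B$, and therefore $F\cap B\ne\emptyset$, as claimed.

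If one prefers a self-contained argument in place of that citation, I would reprove the needed fact directly by a ``nesting'' induction: the chord $[a_p,a_q]\in F$ separates the vertices $a_{p+1},\ldots,a_{q-1}$ from the rest of the polygon, so non-crossingness of $F$ forces every $F$-edge incident to one of them to have both endpoints in $\{a_p,\ldots,a_q\}$; looking at an $F$-neighbour of $a_{p+1}$, it must be some $a_j$ with $j\ge p+3$ unless one of the boundary edges $[a_p,a_{p+1}]$ or $[a_{p+1},a_{p+2}]$ is already used, and then repeating the argument inside the strictly shorter arc $\langle a_{p+1},\ldots,a_j\rangle$ forces, after finitely many steps, the use of some boundary edge $[a_r,a_{r+1}]$ with $p\le r<q$. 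This nesting step is the only place that requires any care (one must check that the left index always advances by at least two while the right index never exceeds $q$, so the process terminates in a contradiction unless a boundary edge appears); everything else is bookkeeping, and the observation follows.
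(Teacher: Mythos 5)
Your proposal is correct and is essentially the paper's own argument: the paper picks an $F$-edge joining two $a$-path vertices with the gap $j-i$ minimal and observes that a gap greater than $1$ makes it impossible for $F$ to reach the trapped vertices $a_{i+1},\dots,a_{j-1}$, which is precisely your nested-descent argument (and the fact you cite, whose proof is the same descent). One small correction: that fact is stated (without proof, as ``the reader will easily convince himself'') inside the proof of the second proposition of Section~2, the one exhibiting a blocking set of size $m$, not in Section~\ref{sec:boundary}, so if you rely on it you should either point there or simply include your self-contained descent, which is what the paper does here anyway.
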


\begin{proof}[Proof of Observation~\ref{obs:connect_a_path}]
Suppose $[a_i,a_j] \in F$ with $0 \leq i < j \leq m-\alpha -\delta$. Choose such an edge with $j-i$ as small as possible. If $j-i=1$, then $[a_i,a_j] \in F \cap B$. If $j-i>1$, then we cannot continue $F$ into $\mathrm{conv}\{a_{\nu} | i \leq \nu \leq j \}$.
\end{proof} 




\begin{proof}[Proof of Theorem~\ref{thm:opp_direc_A}]
First we consider the extreme case $\alpha=\delta=0$. In this case, $B$ is the boundary path $\langle 0 , \ldots, m \rangle$. If $P$ is an SHP that avoids $B$, then by Observation~\ref{obs:connect_a_path} no edge of $P$ connects two vertices of $\langle 0 , \ldots, m \rangle$. In other words, each edge of $P$ uses at least one of the $m-2$ remaining vertices of $CK(2m-1)$. Thus $P$ has at most $2(m-2)$ edges, which is impossible.

\medskip Now we assume $\alpha+\delta>0$ and show that $B$ is a blocker in this case as well.

Assume, on the contrary, that there exists a Hamiltonian path $F$ that misses $B$. Then by Observation~\ref{obs:connect_a_path}, each internal vertex $a_i$ of the $a$-path, $0 < i < m-\alpha-\delta$, must be an internal vertex of the SHP $F$ as well, and its two neighbors in $F$ must be the two adjacent vertices $b_{k(i)}$ and $b_{k(i)+1}$ of the $b$-path. Moreover, the function $i \mapsto k(i)$ must be strictly monotone. (Recall that if $z$ is an extremal vertex of an SHP $F$ then the only edge of $F$ which emanates from $z$ is a boundary edge; on the other hand, if $a_i$ is an internal vertex of the $a$-path, then both boundary edges that are incident to $a_i$ belong to $B$, and thus cannot belong to $F$.)

\begin{claim}\label{Cl:Aux_new1}
For any $1 \leq \nu \leq \alpha+\delta$, we have $k(i_{\nu})>j_{\nu}$.
\end{claim}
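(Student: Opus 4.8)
\textbf{Proof proposal for Claim~\ref{Cl:Aux_new1}.}

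The plan is to analyze, for a fixed $\nu$, where the diagonal $[a_{i_\nu},b_{j_\nu}]$ of $B$ sits relative to the edge $\{a_{i_\nu},b_{k(i_\nu)}\}$ or $\{a_{i_\nu},b_{k(i_\nu)+1}\}$ of $F$ that emanates from $a_{i_\nu}$, and derive a contradiction from $k(i_\nu)\le j_\nu$. First I would recall the two basic facts established just before the claim: every internal vertex $a_i$ of the $a$-path is an internal vertex of $F$ whose two $F$-neighbors are $b_{k(i)}$ and $b_{k(i)+1}$ (because the two boundary edges at $a_i$ lie in $B$, hence are forbidden for $F$, and by Observation~\ref{obs:connect_a_path} no chord $[a_i,a_j]$ is available), and the map $i\mapsto k(i)$ is strictly monotone on the internal vertices of the $a$-path. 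Since $B$ contains the diagonal $[a_{i_\nu},b_{j_\nu}]$, that edge cannot be used by $F$; so $j_\nu\notin\{k(i_\nu),k(i_\nu)+1\}$, i.e. either $k(i_\nu)\ge j_\nu+1$ (which is what we want) or $k(i_\nu)\le j_\nu-2$, or $k(i_\nu)+1=j_\nu$, i.e. $k(i_\nu)=j_\nu-1$ — I would need to rule out the last two.

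The key step is to use the convex-position geometry: the chord $[a_{i_\nu},b_{j_\nu}]\in B$ separates the polygon into two arcs, and if $k(i_\nu)\le j_\nu-1$ then both $F$-neighbors $b_{k(i_\nu)},b_{k(i_\nu)+1}$ of $a_{i_\nu}$ lie on the arc $\langle b_0,\dots,b_{j_\nu}\rangle$ (the short side, or rather the side not containing the ``far'' internal $a$-vertices), while there still remain internal $a$-vertices $a_{i'}$ with $i'>i_\nu$ whose $F$-neighbors must be $b_{k(i')},b_{k(i')+1}$ with $k(i')>k(i_\nu)$ by monotonicity. The idea is that the two $F$-edges at $a_{i_\nu}$ together with the portion of $F$ needed to reach those later $a$-vertices, plus the forced behavior at $a_{i_\nu+1},\dots$, is forced to either cross the chord $[a_{i_\nu},b_{j_\nu}]$ or to leave a sub-polygon with no way out, contradicting simplicity or Hamiltonicity of $F$ exactly as in Observation~\ref{obs:connect_a_path}. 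Concretely, I expect to show that if $k(i_\nu)\le j_\nu-1$ then the vertices $\{a_0,\dots,a_{i_\nu}\}\cup\{b_0,\dots,b_{k(i_\nu)}\}$ (or a similar ``pocket'' determined by the edge of $F$ at $a_{i_\nu}$) form a sub-polygon that $F$ enters but from which the rest of $F$ cannot escape without reusing $a_{i_\nu}$ or crossing an already-placed edge — the numerical relation~\eqref{Eq:Kochavit} between $i_\nu$ and $j_\nu$, together with $1\le i_1\le\cdots\le i_{\alpha+\delta}\le m-\alpha-\delta-1$, is exactly what guarantees this pocket has more $a$-vertices than $b$-vertices available to it, forcing the same edge-count contradiction as in the $\alpha=\delta=0$ case ($2(\#\text{usable vertices})<$ number of edges $F$ must spend inside).

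The main obstacle, I expect, will be bookkeeping the two sub-cases $k(i_\nu)=j_\nu-1$ and $k(i_\nu)\le j_\nu-2$ uniformly, and in particular handling the ``off-by-one'' coming from the two regimes in~\eqref{Eq:Kochavit} (the increment changing from $+2$ to $+1$ as $\nu$ passes $\alpha$), as well as the boundary cases where $b_{j_\nu}$ is an endpoint of the $b$-path ($(\delta=0,i_\alpha=m-\alpha-1)$ or $(\alpha=0,i_1=1)$), where the separating arc degenerates. I would isolate a single combinatorial lemma — ``if an internal $a$-vertex $a_i$ of $F$ has both neighbors among $b_0,\dots,b_t$, then $F$ restricted to $\{a_0,\dots,a_i,b_0,\dots,b_t\}$ already uses too many edges'' — prove it once by the pocket/edge-count argument, and then apply it with $t=k(i_\nu)$ under the hypothesis $k(i_\nu)\le j_\nu$, reading off the contradiction; the strict monotonicity of $k$ is what keeps the pocket from being escaped by later $a$-vertices.
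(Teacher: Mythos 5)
Your first paragraph assembles exactly the right ingredients: each internal $a$-vertex $a_i$ is forced to have the consecutive $F$-neighbours $b_{k(i)},b_{k(i)+1}$, the map $i\mapsto k(i)$ is strictly monotone, and $[a_{i_\nu},b_{j_\nu}]\in B$ forbids $k(i_\nu)\in\{j_\nu-1,j_\nu\}$. (Note that this last fact already disposes of your subcase $k(i_\nu)=j_\nu-1$, since then $k(i_\nu)+1=j_\nu$ and $b_{j_\nu}$ would be an $F$-neighbour of $a_{i_\nu}$; the only case genuinely left is $k(i_\nu)\le j_\nu-2$.) The paper's proof simply combines these three facts by induction on $\nu$: from $k(i_\nu)\ge j_\nu+1$ and monotonicity, $k(i_{\nu+1})\ge k(i_\nu)+(i_{\nu+1}-i_\nu)\ge j_{\nu+1}-1$ (or $\ge j_{\nu+1}$ at the transition $\nu=\alpha$), because by \eqref{Eq:Kochavit} the increment of $j$ exceeds that of $i$ by exactly $2$ (resp.\ $1$); the two forbidden values $j_{\nu+1}-1,j_{\nu+1}$ then lift this to $k(i_{\nu+1})\ge j_{\nu+1}+1$. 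Each beam pays for the extra $+2$ in $j$, one step at a time.

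The gap is in the mechanism you substitute for this induction, namely the pocket/edge-count lemma. For the pocket $\{a_0,\dots,a_{i_\nu}\}\cup\{b_0,\dots,b_t\}$, the count ``no edge joins two $a$-vertices, so the number of edges is at most twice the number of $b$-vertices'' reads $i_\nu+t+1\le 2(t+1)$, which is violated only when $t\le i_\nu-2$. You propose to apply the lemma with $t=k(i_\nu)\le j_\nu$, but $j_\nu=i_\nu+2\nu-2$ (for $\nu\le\alpha$; $i_\nu+2\nu-3$ beyond), so for every $\nu\ge 1$ there is a whole range $k(i_\nu)\in\{i_\nu-1,\dots,j_\nu-2\}$ that the vertex count cannot exclude; the lemma as stated (``already uses too many edges'') is simply false for $t\ge i_\nu-1$. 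A concrete instance: take $\alpha=2,\delta=0$, $i_1=1,i_2=2$, so $j_1=1$, $j_2=4$ and the claim demands $k(2)\ge 5$. The value $k(2)=2$ gives $F$-neighbours $b_2,b_3$ of $a_2$, neither edge in $B$, and the pocket $\{a_0,a_1,a_2,b_0,b_1,b_2\}$ passes the count ($5$ edges against a budget of $6$); it is excluded only because $k(1)<2$ forces $k(1)\in\{0,1\}$, both of which make $b_1$ a neighbour of $a_1$ and collide with the beam $[a_1,b_1]$. In other words, the obstruction in the case $k(i_\nu)\le j_\nu-2$ is not a vertex deficit in the pocket but the accumulated effect of the earlier beams $[a_{i_\mu},b_{j_\mu}]$, $\mu<\nu$, sitting inside it, and accounting for them is precisely the inductive bookkeeping described above, which your proposal does not supply.
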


\begin{proof}[Proof of Claim~\ref{Cl:Aux_new1}]
By induction on $\nu$.

\medskip \noindent \textbf{Induction basis.} Assume $\nu=1$. We consider two cases:
\begin{enumerate}
\item $\alpha>0.$ In this case, $j_1=i_1$. Note that $k(i_1) \geq i_1$ since $k(1) \geq 1$ and the function $i \mapsto k(i)$ is strictly increasing. We cannot have $k(i_1)=i_1$, as $j_1=i_1$ and so $[a_{i_1},b_{i_1}] \in B$. Hence, $k(i_1)>i_1=j_1$, as asserted.

\item $\alpha=0.$ In this case, $j_1=i_1-1$. Since $k(i_1) \geq i_1$, as in the first case, we have $k(i_1) \geq i_1>i_1-1 \geq j_1$, as asserted.
\end{enumerate}

\medskip \noindent \textbf{Induction step.} We assume $k(i_{\nu})>j_{\nu}$, and want to prove that $k(i_{\nu+1})>j_{\nu+1}$. We consider three cases:
\begin{enumerate}
\item $\nu \leq \alpha-1$ and so $\nu+1 \leq \alpha$.
\item $\nu=\alpha$ and so $\nu+1=\alpha+1$.
\item $\nu \geq \alpha+1$.
\end{enumerate}
Let $i_{\nu+1}=i_{\nu}+x$. Note that in Cases 1 and 3 we have $j_{\nu+1}=j_{\nu}+x+2$, and in Case 2 we have $j_{\nu+1}=j_{\nu}+x+1$.
\begin{itemize}
\item Cases 1,3: As $k(i_{\nu}) \geq j_{\nu}+1$ by assumption, we get $k(i_{\nu+1})=k(i_{\nu}+x) \geq j_{\nu}+1+x=j_{\nu+1}-1$, by the monotonicity of $k$. Since $[a_{i_{\nu+1}},b_{j_{\nu+1}}] \in B$, we can have neither $k(i_{\nu+1})=j_{\nu+1}-1$ nor $k(i_{\nu+1})=j_{\nu+1}$. Hence, $k(i_{\nu+1}) \geq j_{\nu+1}+1$, as asserted.

\item Case 2: By the same monotonicity argument as in the previous case, we have $k(i_{\nu+1}) \geq j_{\nu}+1+x=j_{\nu+1}$. As we cannot have $k(i_{\nu+1})=j_{\nu+1}$, this implies $k(i_{\nu+1}) \geq j_{\nu+1}+1$, as asserted.
\end{itemize}
This completes the proof of Claim~\ref{Cl:Aux_new1}.
\end{proof}

Now we return to the proof of the theorem. Recall that we assumed, to reach a contradiction, that the Hamiltonian path $F$ misses $B$. We consider two cases:

\medskip \noindent \textbf{Case 1: $\delta \neq 0$.} In this case, by~\eqref{Eq:Kochavit}, $j_{\alpha+\delta}=i_{\alpha+\delta}+2\alpha+2\delta-3$. Hence, the number of vertices of the $a$-path that follow $a_{i_{\alpha+\delta}}$ is $t=m-\alpha-\delta-i_{\alpha+\delta}$, and the number of vertices of the $b$-path that follow $b_{j_{\alpha+\delta}}$ is
\[
m-3+\alpha+\delta-j_{\alpha+\delta} = m-3+\alpha+\delta-i_{\alpha+\delta} -2\alpha-2\delta+3=m-\alpha-\delta-i_{\alpha+\delta} = t.
\]
As by Claim~\ref{Cl:Aux_new1} we have $k(i_{\alpha+\delta})>j_{\alpha+\delta}$, this implies that the part of $F$ that follows the edge $[a_{i_{\alpha+\delta}},b_{k(i_{\alpha+\delta})+1}] \in F$ (excluding the vertex $b_{k(i_{\alpha+\delta})+1}$) contains $t$ vertices of the $a$-path and at most $t-2$ vertices of the $b$-path. This is impossible since no edge of $F$ connects two vertices of the $a$-path, a contradiction.

\medskip \noindent \textbf{Case 2: $\delta =0$ and $\alpha \neq 0$.} In this case, by~\eqref{Eq:Kochavit}, $j_{\alpha+\delta}=j_{\alpha}=i_{\alpha}+2\alpha-2$. Hence, the number of vertices of the $a$-path which follow $a_{i_{\alpha}}$ is $t'=m-\alpha-i_{\alpha}$, and the number of vertices of the $b$-path which follow $b_{j_{\alpha}}$ is
\[
m-3+\alpha-j_{\alpha} = m-3+\alpha-i_{\alpha} -2\alpha+2=m-\alpha-i_{\alpha}-1 = t'-1.
\]
As by Claim~\ref{Cl:Aux_new1} we have $k(i_{\alpha})>j_{\alpha}$, this implies that the part of $F$ which follows the edge $[a_{i_{\alpha}},b_{k(i_{\alpha})+1}] \in F$ contains $t'$ vertices of the $a$-path and at most $t'-3$ vertices of the $b$-path. As in the previous case, this is impossible, since no edge of $F$ connects two vertices of the $a$-path, a contradiction.

This completes the proof of the theorem.
\end{proof}

\begin{remark}
We note that the above proof that the elements of Class A are blockers can replace the proof of the sufficiency direction in the case of even $n$ presented in~\cite{KP16}.
\end{remark}

\subsection{The elements of Class B are blockers}

In this subsection we prove the converse direction for the elements of Class B. Like in the case of Class A, we start with an alternative description of the elements of Class B.

\textbf{An alternative description of Class B:}
In the elements of Class B, the boundary path is broken. Each such element can be obtained from some blocker with an unbroken boundary path (i.e., a blocker of Class A), with parameters $m,\alpha,\delta$ (where $\alpha \geq 0,$ $\delta \geq 0,$ and $\alpha+\delta \leq m-2$), in the following way. We introduce three additional parameters $\beta,\gamma,\epsilon$, with $\beta \geq 2, \gamma \geq 2, \beta+1+\gamma = m-\alpha-\delta,$ and $1 \leq \epsilon < \min(\beta,\gamma)$. (Note that this is possible only when $m-\alpha-\delta \geq 5$). We remove from the boundary path $\langle a_0,\ldots,a_{m-\alpha-\delta} \rangle$ of $B$ the edge $h=[a_{\beta},a_{\beta+1}]$, thus splitting it into an ``upper'' $a$-path $\langle a_0,\ldots,a_{\beta} \rangle$ of length $\beta$ and a ``lower'' $a$-path $\langle a_{\beta+1},\ldots,a_{m-\alpha-\delta} \rangle$ of length $\gamma$. Then we replace the removed edge $[a_{\beta},a_{\beta+1}]$ by the parallel edge $[a_{\beta-\epsilon},a_{\beta+1+\epsilon}]$. This new edge connects an internal vertex of the upper $a$-path with an internal vertex of the lower $a$-path, as shown in Figure~\ref{fig:opp_dir_broken_spine}. We also make sure that the first $\alpha$ beams (i.e., the beams in directions $1,3,\ldots,2\alpha-1$) emanate from internal vertices of the upper $a$-path, and the last $\delta$ beams emanate from internal vertices of the lower $a$-path.

Let us call this modified set
\[
B(\epsilon) = B \setminus [a_{\beta},a_{\beta+1}] \cup [a_{\beta-\epsilon},a_{\beta+1+\epsilon}].
\]
(We may call the unmodified blocker $B=B(0)$.)

\begin{figure}[tb]
\begin{center}
\scalebox{0.9}{
\includegraphics{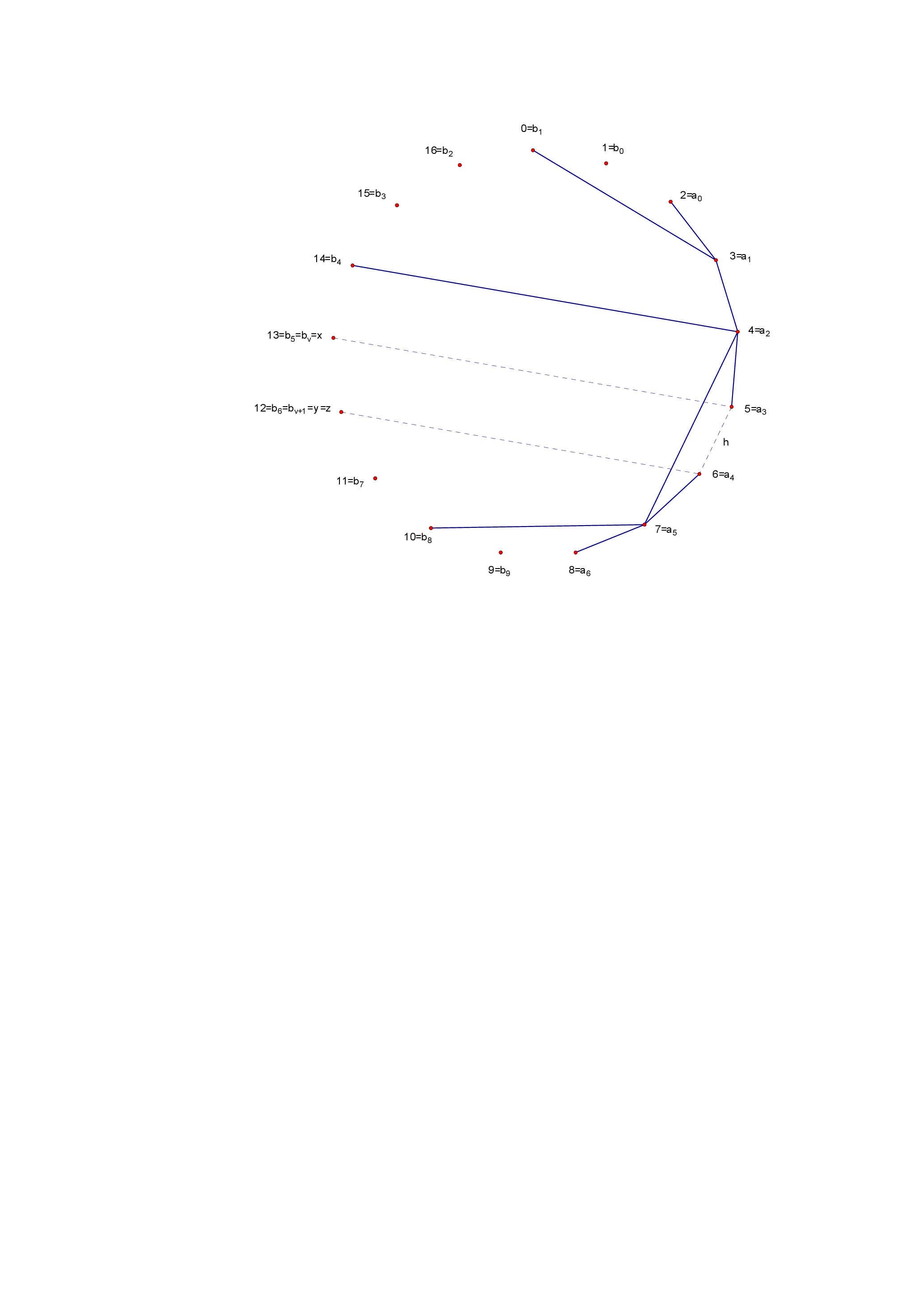}
} \caption{An illustration for notations in the proof of Theorem~\ref{thm:opp_direc_B}. The blocker $B(\epsilon)$ is depicted in regular lines, where the parameters are $m=9,\alpha=2,\delta=1$, $\beta=3,\gamma=2,$ and $\epsilon=1$. A part of the Hamiltonian path $P$ is depicted in punctured lines. $G^+$ is spanned by the vertices $\{ 5,4,3,2,1,0,16,15,14,13 \}$, and $G^-$ is spanned by the vertices $\{ 6,7,8,9,10,11 \}$.}
\label{fig:opp_dir_broken_spine}
\end{center}
\end{figure}

\begin{theorem}
Any subset of edges $B \subset E(G)$ of the type described in Class B is a blocker for SHP's in $CK(2m-1)$.
\label{thm:opp_direc_B}
\end{theorem}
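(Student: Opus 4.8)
The plan is to mimic the structure of the proof of Theorem~\ref{thm:opp_direc_A}, adapting it to the broken boundary path of $B(\epsilon)$. Suppose, for contradiction, that $F$ is an SHP avoiding $B(\epsilon)$. The key new feature is that the edge $[a_\beta,a_{\beta+1}]$ is no longer in the set, so edges of $F$ \emph{are} allowed to join the upper $a$-path to the lower $a$-path only through the region ``cut off'' by $[a_{\beta-\epsilon},a_{\beta+1+\epsilon}]$; everywhere else the analogue of Observation~\ref{obs:connect_a_path} should still apply. Concretely, I would first prove a modified version of Observation~\ref{obs:connect_a_path}: any edge of $F$ joining two vertices $a_i,a_j$ with both indices $\le \beta$ (or both $\ge \beta+1$) forces $F\cap B(\epsilon)\neq\emptyset$, by the same ``cannot re-enter a convex sub-cap'' argument; and similarly the edge $[a_{\beta-\epsilon},a_{\beta+1+\epsilon}]\in B(\epsilon)$ blocks any $F$-edge straddling it ``too far''. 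Consequently every internal vertex of the upper $a$-path $\langle a_1,\ldots,a_{\beta-1}\rangle$ and of the lower $a$-path $\langle a_{\beta+2},\ldots,a_{m-\alpha-\delta-1}\rangle$ must be an internal vertex of $F$ whose two $F$-neighbours are consecutive vertices $b_{k(i)},b_{k(i)+1}$ of the $b$-path (or the two endpoints of the removed edge $h$, but those are dealt with separately), and the partial functions $i\mapsto k(i)$ on the upper and lower $a$-paths are each strictly monotone. This is where $G^+$ and $G^-$ from Figure~\ref{fig:opp_dir_broken_spine} come in: split the ambient polygon by the diagonal $[a_{\beta-\epsilon},a_{\beta+1+\epsilon}]$ into a big part $G^+$ containing the $b$-path and the two truncated $a$-paths, and a small part $G^-$ containing only $a_{\beta-\epsilon+1},\ldots,a_{\beta+1+\epsilon-1}$.

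Next, I would establish the analogue of Claim~\ref{Cl:Aux_new1} separately on the two $a$-paths. For the upper $a$-path together with the beams $e_1,e_3,\ldots,e_{2\alpha-1}$ (which by part~(4) of the Class~B definition emanate from its internal vertices), run the same induction on $\nu$: using the coordinate formula \eqref{Eq:Kochavit} restricted to $1\le\nu\le\alpha$ and the monotonicity of $k$ on $\langle a_1,\ldots,a_{\beta-1}\rangle$, conclude $k(i_\nu)>j_\nu$ for all the upper beams, and moreover $k$ must ``stay ahead'' up to the top endpoint $a_\beta$ of the removed edge. Symmetrically, running the induction downward from $a_{m-\alpha-\delta}$ along the lower $a$-path with the beams $e_0,e_{-2},\ldots,e_{2-2\delta}$ (which by part~(5) emanate from internal vertices of $B^-$), and also accounting for the replacement edge $[a_{\beta-\epsilon},a_{\beta+1+\epsilon}]$ which plays the role of an extra ``beam-like'' constraint at the cut, one gets the mirror inequalities for the lower beams. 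The condition $1\le\epsilon<\min(\beta,\gamma)$ is exactly what guarantees that the cut vertex $a_{\beta-\epsilon}$ is internal to the upper path and $a_{\beta+1+\epsilon}$ is internal to the lower path, so both ends of the replacement edge are genuinely ``caught'' by the monotone function $k$.

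Finally, I would derive the contradiction by a vertex-counting argument as in Cases~1 and~2 of Theorem~\ref{thm:opp_direc_A}, but now inside $G^+$. After the edge $[a_{\beta-\epsilon},a_{\beta+1+\epsilon}]$ and the beam structure are taken into account, the portion of $F$ lying in $G^+$ must, reading from one extremity, accumulate more vertices of the (truncated) $a$-paths than of the $b$-path: each $a$-vertex forces a new $b$-pair, but the available $b$-vertices run out because the offsets $j_\nu$ defined in \eqref{Eq:Kochavit} push the $b$-indices exactly far enough ahead. More precisely, one compares the number of $a$-path vertices following $a_{i_{\alpha}}$ (or $a_{i_{\alpha+\delta}}$) with the number of $b$-path vertices following $b_{j_\alpha}$ (resp.\ $b_{j_{\alpha+\delta}}$), using that no $F$-edge joins two vertices of the same $a$-path, to get a strict deficit, which is impossible. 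The case analysis will mirror that of Theorem~\ref{thm:opp_direc_A} ($\delta\neq0$ versus $\delta=0$, and symmetrically for the top), with one or two extra sub-cases for the position of the cut relative to $a_{i_\alpha}$ and $a_{i_{\alpha+\delta}}$.

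\textbf{Main obstacle.} The delicate point is handling the two endpoints $a_\beta$ and $a_{\beta+1}$ of the removed edge $h$ and the replacement edge $[a_{\beta-\epsilon},a_{\beta+1+\epsilon}]$ simultaneously: unlike in Class~A, $F$ is now permitted to ``cross'' from the upper to the lower $a$-path, and one must argue that any such crossing edge of $F$ must be contained in the small cap $G^-$ (hence is itself blocked, or forces the rest of $F$ into an impossible vertex budget), so that the monotone-$k$ picture survives intact on $G^+$. Making this airtight — in particular ruling out that $F$ threads through the cut in a way that ``re-balances'' the $a$/$b$ vertex count — is the technical heart of the argument, and is the reason the odd case is genuinely harder than the even one.
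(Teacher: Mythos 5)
Your plan diverges from the paper's proof in a way that leaves the central difficulty unresolved. The paper's argument begins with a one-line observation that does most of the work and that your proposal never uses: since $B(\epsilon)=B\setminus\{h\}\cup\{[a_{\beta-\epsilon},a_{\beta+1+\epsilon}]\}$ and $B=B(0)$ is already known to be a Class~A blocker (Theorem~\ref{thm:opp_direc_A}), any SHP $P$ avoiding $B(\epsilon)$ must contain the unique removed edge $h=[a_{\beta},a_{\beta+1}]$. This pins down exactly how $P$ passes between the upper and lower $a$-paths and collapses your ``main obstacle'' entirely. Without it, your intermediate claims do not hold as stated: an $F$-edge such as $[a_1,a_{m-\alpha-\delta-1}]$ joins the two $a$-paths, is not in $B(\epsilon)$, does not cross the replacement edge $[a_{\beta-\epsilon},a_{\beta+1+\epsilon}]$, and does not lie in the small cap it cuts off — so your assertion that every crossing edge of $F$ is confined to that cap is false, and the monotone-$k$ picture on your $G^+$ is not protected. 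You correctly flag this as ``the technical heart,'' but flagging it is not supplying it; as written the proposal is a program, not a proof.

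The second divergence is in how the contradiction is reached. The paper does not redo the monotone-$k$/vertex-counting induction of Claim~\ref{Cl:Aux_new1} on the broken backbone. Instead, after showing that (in the internal-edge case) the $F$-neighbours of $a_{\beta}$ and $a_{\beta+1}$ are two consecutive $b$-vertices $b_{\nu},b_{\nu+1}$, it splits $CK(2m-1)$ at the hole $h$ and the single omitted vertex $z=b_{2\alpha+\beta-1}$ into two complete convex geometric graphs $G^{+},G^{-}$ of \emph{even} order, checks that $B(\epsilon)$ restricted to each is a blocker in the sense of~\cite{KP16}, and derives the contradiction from the even-order theorem; a separate short induction on $\epsilon$ handles the case where $h$ is a leaf edge of $P$. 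Your decomposition (cutting along the replacement edge) is a different object and does not feed into any previously established blocker theorem, so you would have to prove the adapted Claim~\ref{Cl:Aux_new1} and the final count from scratch, including how the replacement edge interacts with the function $k$ — the step you leave as ``an extra beam-like constraint.'' If you want to rescue your route, start by proving that $F$ must contain $h$ (via Theorem~\ref{thm:opp_direc_A}), and then either carry out the counting honestly on both sides of $h$ or, as the paper does, reduce to the even case.
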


\begin{proof}
By the alternative description, it is sufficient to show that for each $B$ that belongs to Class A and for each possible $\beta,\gamma,\epsilon$, the set $B(\epsilon)$ is a blocker for SHP's in $CK(2m-1)$. Suppose it is not, and assume $P$ is an SHP that avoids $B(\epsilon)$. $P$ must use the edge $h=[a_{\beta},a_{\beta+1}]$, since otherwise it would avoid the blocker $B$ as well. $h$ may be either an internal edge or a leaf edge of $P$. We consider these two cases separately.

\medskip \noindent \textbf{Case I: $h$ is a leaf edge of $P$.} In this case we use induction on $\epsilon$. Note that if $P$ misses the edge $[a_{\beta-\iota},a_{\beta+1+\iota}]$ for some $\iota$, $1 \leq \iota <\epsilon$, then $P$ avoids $B(\iota)$. Thus we may assume that $P$ uses $[a_{\beta-\iota},a_{\beta+1+\iota}]$ for $\iota=0,1,\ldots,\epsilon-1$. Furthermore, note that all the boundary edges incident with $a_i$, $\beta-\epsilon \leq i \leq \beta+1+\epsilon$, belong to $B(\epsilon)$. Thus $P$ has no choice, but to zigzag its way through these edges, emerging from $[a_{\beta-\epsilon+1},a_{\beta+\epsilon}]$ at one of its endpoints, say $a_{\beta+\epsilon}$. From there it must continue directly to $a_{\beta-\epsilon}$ (since $[a_{\beta+\epsilon},a_{\beta+\epsilon+1}] \in B$). Now we are stuck, since both boundary edges of the part of $CK(2m-1)$ that is still uncovered by $P$, $[a_{\beta-\epsilon},a_{\beta-\epsilon-1}]$ and $[a_{\beta+\epsilon},a_{\beta+\epsilon+1}]$, belong to $B(\epsilon)$. (The case where the last vertex of $P$ on the edge $[a_{\beta-\epsilon+1},a_{\beta+\epsilon}]$ is $a_{\beta-\epsilon+1}$ is exactly the same.)

\medskip \noindent \textbf{Case II: $h=[a_{\beta},a_{\beta+1}]$ is an internal edge of $P$.} Let $x$ be the vertex that precedes $a_{\beta}$ and $y$ the vertex that follows $a_{\beta+1}$ on $P$. This means that $\langle x,a_{\beta},a_{\beta+1},y \rangle$ is a sub-path of $P$ (see Figure~\ref{fig:opp_dir_broken_spine}). $x$ is not a vertex of the upper $a$-path $\langle a_0,\ldots,a_{\beta} \rangle$. (If it were, then we could not complete $P$ to cover all vertices that lie beyond $[x,a_{\beta}]$, since all boundary edges that lie beyond $[x,a_{\beta}]$ are part of the upper $a$-path, and thus belong to $B(\epsilon)$.) By the same token, $y$ is not a vertex of the lower $a$-path $\langle a_{\beta+1}, \ldots, a_{m-\alpha-\delta} \rangle$.

Since $P$ is a simple path, it follows that both $x$ and $y$ lie on the $b$-path $\langle b_0,\ldots,b_{m-3+\alpha+\delta} \rangle$, and that $x$ precedes $y$ on that path. Moreover, $x$ and $y$ are adjacent on that path, since otherwise $\langle x,a_{\beta},a_{\beta+1},y \rangle$ would separate the remaining vertices into three non-empty parts. Thus $x=b_{\nu}$ and $y=b_{\nu+1}$ for some $\nu$, $0 \leq \nu \leq m-4+\alpha+\delta$.

The next step of the proof will be to split $CK(2m-1)$ into two complete convex geometric graphs $G^+$ and $G^-$ of \textbf{even} order, leaving out just one vertex, in such a way that the edges of $B(\epsilon)$ in $G^+$ [resp., $G^-$] form a blocker for SHP's in $G^+$ [resp., $G^-$]. Here we use the sufficiency part of the characterization of blockers for SHP's in complete convex geometric graphs of even order, given in~\cite{KP16}. To finalize the proof, we shall use the part of $P$ that runs through $G^+$ [or $G^-$] to construct an SHP in $G^+$ [or $G^-$] that avoids a blocker for SHP's in $G^+$ [or $G^-$], and thus reach a contradiction.

Let $G^+$ [$G^-$] be the subgraph of $CK(2m-1)$ of order $2(\alpha+\beta)$ [$2(\gamma+\delta)$] spanned by $V^+$ [$V^-$], where
\[
V^+ = \{a_{\beta},a_{\beta-1},\ldots,a_0,b_0,\ldots,b_{2\alpha+\beta-2}\}
\]
and
\[
V^- = \{b_{2\alpha+\beta},\ldots,b_{m-3+\alpha+\delta},a_{m-\alpha-\delta},a_{m-\alpha-\delta-1},\ldots,a_{\beta+1}\}.
\]
This leaves out the vertex $z=b_{2\alpha+\beta-1}$. The convex hulls of $V(G^+)$ and $V(G^-)$ are disjoint, and their union includes all boundary edges of $CK(2m-1)$, except $h=[a_{\beta},a_{\beta+1}]$ and the two boundary edges incident with $z$.

Recall that the $\alpha$ uppermost beams in $B(\epsilon)$ are $[a_{i_{\nu}},b_{j_{\nu}}]$, $\nu=1,\ldots,\alpha$, where $1 \leq i_1 \leq i_2 \leq \ldots \leq i_{\alpha} \leq \beta-1$, and $j_{\nu}=i_{\nu}+2\nu-2$. Thus
\[
1 \leq i_1=j_1 < j_2 < \ldots < j_{\alpha} \leq \beta-1+2\alpha-2 = 2\alpha+\beta-3<2\alpha+\beta-2,
\]
and hence these $\alpha$ beams are included in $G^+$. The first beam $[a_{i_1},b_{j_1}]$ (if $\alpha>0$) is parallel to $[b_0,a_0]$. The directions of the ensuing $\alpha-1$ diagonals decrease by 2 at a time. This means that these diagonals are parallel to the $\alpha$ boundary edges that precede $[a_0,a_1]$ on the boundary of $G^+$. The $b$-vertices of any two such diagonals are farther apart than their $a$-vertices. Thus, the $\alpha+\beta$ edges of $B(\epsilon)$ in $G^+$ conform to the description of blockers for SHP's in the complete convex geometric graph $G^+$ of order $2(\alpha+\beta)$ (described in detail in~\cite{KP16}). Similar calculations show that the edges of $B(\epsilon)$ in $G^-$, i.e., the $\gamma$ edges of the lower boundary path $\langle a_{\beta+1},\ldots,a_{m-\alpha-\delta} \rangle$ and the $\delta$ lowermost diagonals, conform to the description of a blocker for SHP's in the complete convex geometric graph $G^-$ (of order $2(\gamma+\delta)$).

Now let us return to the short sub-path $\langle x,a_{\beta},a_{\beta+1},y \rangle$ of $P$, where $x=b_{\nu}$ and $y=b_{\nu+1}$ for some $\nu$, $0 \leq \nu \leq m-4+\alpha+\delta$. If $y=z$, as in Figure~\ref{fig:opp_dir_broken_spine}, then the initial part of $P$, ending at $a_{\beta}$ with the edge $[x,a_{\beta}]$ (and excluding $h$, $[a_{\beta+1},y]$, and all subsequent edges of $P$), is an SHP of $G^+$ that avoids all edges of $B$ in $G^+$. But these edges form a blocker for SHP's in $G^+$, a contradiction.

If $y$ lies ``above'' $z$, i.e., if $y$ precedes $z(=b_{2\alpha+\beta-1})$ on the $b$-path, then the initial part of $P$ described above (ending at $a_{\beta}$) covers all vertices of $G^+$ that lie either on or above the diagonal $[x,a_{\beta}]$. We extend this path by adding the diagonal $[a_{\beta},b_{2\alpha+\beta-2}]$, and all boundary edges of the $b$-path from $b_{2\alpha+\beta-2}$ up to $y(=b_{\nu+1})$. This yields an SHP of $G^+$ that avoids all edges of $B$ in $G^+$, a contradiction. (Note that all these additional edges are not in $B$. Indeed, edges of the $b$-path are never in $B$. Beams in $B$ always connect an \emph{internal} vertex of the upper (or lower) part of the $a$-path with a vertex of the $b$-path.)

Finally, if $y$ lies ``below'' $z$, i.e., if $z$ precedes $y$ on the $b$-path, then we reach a contradiction in a similar way, with $G^-$ instead of $G^+$. This completes the proof of the theorem.
\end{proof}

\begin{figure}[tb]
\begin{center}
\scalebox{1.0}{
\includegraphics{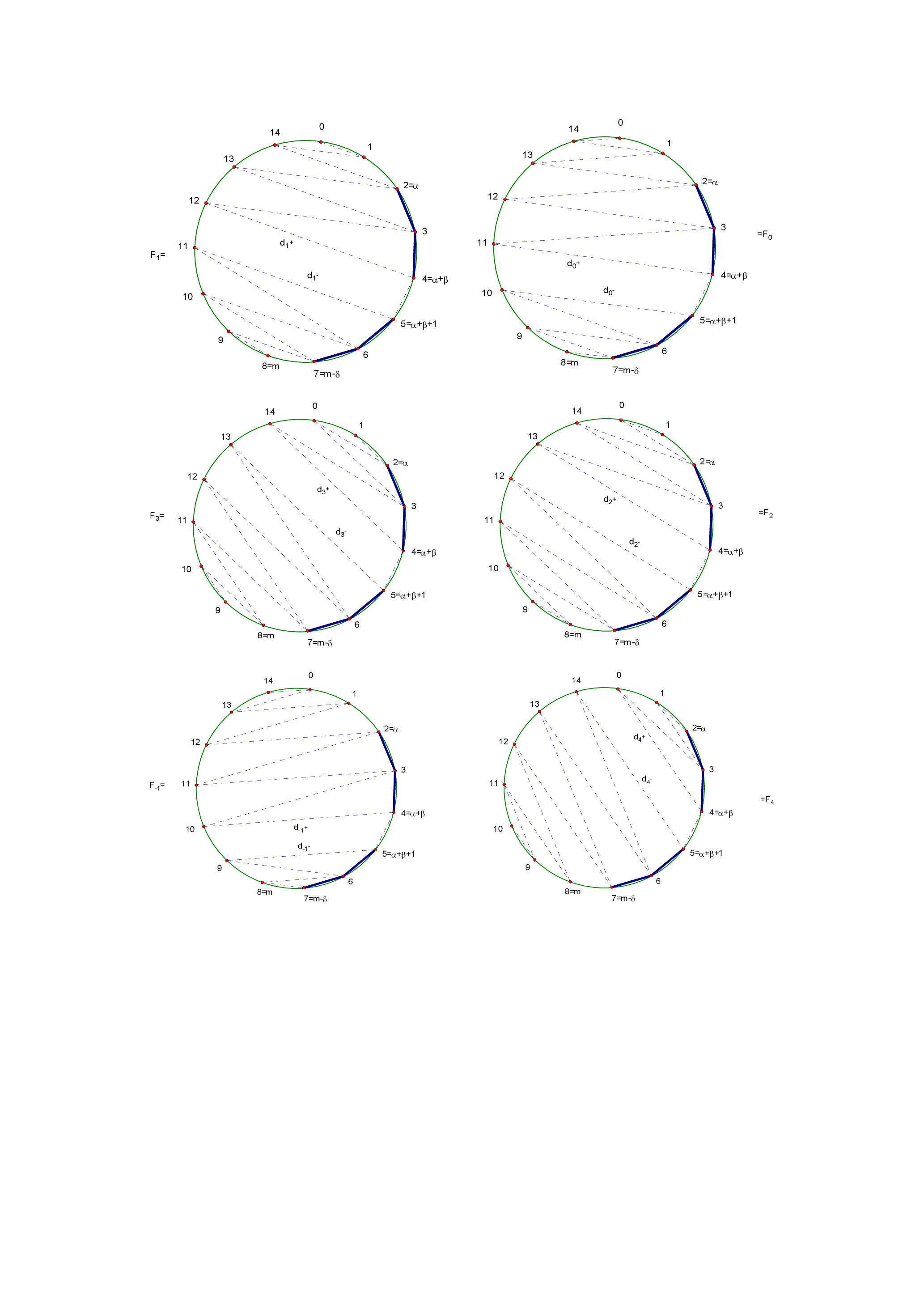}
} \caption{Illustration for the proof of Proposition~\ref{cl:test_paths}. The parameters are $m=8$, $\alpha=2$, $\delta=1$, and $\beta=2$. Depicted in punctured lines are the test paths $F_{-1},F_0,\ldots,F_4$.}
\label{fig:F_i}
\end{center}
\end{figure}

\end{document}